\crefname{subsection}{Subsection}{Subsections}
\crefname{subsubsection}{Subsubsection}{Subsubsections}
\theoremstyle{definition}
\newtheorem{theorem}{Theorem}[subsection]
\newtheorem{defn}[theorem]{Definition}
\newtheorem{ex}[theorem]{Example}
\newtheorem{cor}[theorem]{Corollary}
\newtheorem{lemma}[theorem]{Lemma}
\newtheorem{prop}[theorem]{Proposition}
\newtheorem{rmk}[theorem]{Remark}
\newtheorem{construction}[theorem]{Construction}
\newtheorem*{rmk*}{Remark}
\newtheorem*{ex*}{Example}
\newtheorem*{theorem*}{Theorem}
\newtheorem*{defn*}{Definition}
\newcommand{\bbZ}{\mathbb{Z}}
\newcommand{\bbH}{\mathbb{H}}
\newcommand{\bbE}{\mathbb{E}}
\newcommand{\bbQ}{\mathbb{Q}}
\newcommand{\bbC}{\mathbb{C}}
\newcommand{\sfh}{\mathsf{h}}
\newcommand{\Fin}{\mathcal{F}\mathrm{in}}
\newcommand{\Ind}{\operatorname{Ind}}
\newcommand{\Res}{\operatorname{Res}}
\newcommand{\Aut}{\operatorname{Aut}}
\newcommand{\Fib}{\operatorname{Fib}}
\newcommand{\coker}{\operatorname{Coker}}
\newcommand{\im}{\operatorname{Im}}
\renewcommand{\ker}{\operatorname{Ker}}
\newcommand{\Sq}{\mathrm{Sq}}
\newcommand{\res}{\operatorname{res}}
\newcommand{\tr}{\operatorname{tr}}
\newcommand{\sgn}{\mathrm{sgn}}
\newcommand{\End}{\operatorname{End}}
\newcommand{\id}{\operatorname{id}}
\newcommand{\Ann}{\operatorname{Ann}}
\newcommand{\Pin}{\operatorname{Pin}}
\newcommand{\Dic}{\mathrm{Dic}}
\newcommand{\h}{\mathrm{h}}
\newcommand{\ab}{\mathrm{ab}}
\newcommand{\rat}{W}
\newcommand{\ol}{\overline}
\newcommand\xqed[1]{%
  \leavevmode\unskip\penalty9999 \hbox{}\nobreak\hfill
  \quad\hbox{#1}}
\newcommand\tqed{\xqed{$\triangleleft$}}
\DeclareRobustCommand{\tvdots}{%
  \vbox{\baselineskip4\p@\lineskiplimit\z@\kern0\p@\hbox{.}\hbox{.}\hbox{.}}}
\newcommand{\rightsim}{\xrightarrow{\raisebox{-1pt}{\tiny{$\sim$}}}}
\begin{document}

\title{Equivariant $v_{1,\vec{0}}$-self maps}
\author{William Balderrama}
\author{Yueshi Hou}
\author{Shangjie Zhang}

\begin{abstract}
Let $G$ be a cyclic $p$-group or generalized quaternion group, $X\in \pi_0 S_G$ be a virtual $G$-set, and $V$ be a fixed point free complex $G$-representation. Under conditions depending on the sizes of $G$, $X$, and $V$, we construct a self map $v\colon\Sigma^V C(X)_{(p)}\rightarrow C(X)_{(p)}$ on the cofiber of $X$ which induces an equivalence in $G$-equivariant $K$-theory. These are transchromatic $v_{1,\vec{0}}$-self maps, in the sense that they are lifts of classical $v_1$-self maps for which the telescope $\smash{C(X)_{(p)}[v^{-1}]}$ can have nonzero rational geometric fixed points.
\end{abstract}

\maketitle

\section{Introduction}

Fixing a prime $p$, the \textit{Morava $K$-theories} $K(n)$ are a sequence of $p$-local homology theories, starting with $K(0) = H\bbQ$ and $K(1) = KU/p$, with the property that if $X$ is a nonzero finite $p$-local spectrum then $\{0\leq n < \infty : K(n)_\ast X = 0\} = [0,n)$ for some $n$, called the \textit{type} of $X$. The \textit{periodicity theorem} of Hopkins and Smith \cite{hopkinssmith1998nilpotence} asserts that every type $n$ spectrum admits a \textit{$v_n$-self map}: a map $\Sigma^k X \rightarrow X$ inducing an isomorphism in $K(n)$-theory.

We are interested in the analogous phenomena within \textit{equivariant} stable homotopy theory, where the situation is significantly richer. If $G$ is a group acting on a finite complex $X$, then the geometric fixed points $\Phi^H X$ can have \textit{different} types as $H$ ranges over the subgroups of $G$. Determining the possible types of the geometric fixed points of a finite $G$-spectrum amounts to the study of the \textit{Balmer spectrum} of $G$-spectra \cite{balmersanders2016spectrum}. Complete answers have been obtained for abelian groups \cite{barthelhausmannnaumannnikolausnoelstapleton2019balmer}, as well as extraspecial $2$-groups and generalized quaternion groups \cite{kuhnlloyd2022chromatic}. We refer the reader to \cite{behrenscarlisle2024periodic} for a survey of this emerging field of \textit{chromatic equivariant homotopy theory}. 

Classically, $v_1$-self maps were first produced by Adams in his work on the $J$-homomorphism \cite{adams1966on}. The construction of $v_n$-self maps remains a topic of major interest, as they allow one to produce and control \textit{infinite periodic families} within the stable homotopy groups of spheres \cite{davismahowald1981v1v2, behrenspemmaraju2004existence, behrenshillhopkinsmahowald2008existence, bhattacharyaeggermahowald2017periodic, bhattacharyaegger2020class, behrensmahowaldquigley2023primary, belmontshimomura2023beta}. Equivariantly, one must allow for a notion of ``equivariant $v_n$-self map'' where $n$ is allowed to vary over the subgroups of $G$. Only a very small number of equivariant self maps have been constructed recently, by Bhattacharya--Guillou--Li \cite{bhattacharyaguillouli2022rmotivic} and Behrens--Carlisle \cite{behrenscarlisle2024periodic}, mostly for $G = C_2$.

Our goal in this paper is to establish the following family of equivariant $v_1$-self maps. Say that a $G$-representation $V$ is \textit{fixed point free} if $G$ acts freely on the unit sphere in $V$.

\begin{theorem}\label{thm:selfmap}
Let $G$ be a cyclic $p$-group or generalized quaternion group, of order $p^n$. Let $X \in \pi_0 S_G$ be a virtual $G$-set of virtual cardinality $p^tc$ with $p\nmid c$. Let $V$ be a fixed point free complex $G$-representation of complex dimension $p^kc(p-1)$, or $2^{k-1}c$ when $p=2$, with $p\nmid c$, and when $p=2$ assume that $k\geq 3$.

If $k+1\geq n+t$, or $k>n$ when $(p,t) = (2,1)$, then there is a map $\Sigma^V C(X)_{(p)}\rightarrow C(X)_{(p)}$ inducing an equivalence in $G$-equivariant $K$-theory. 
\tqed
\end{theorem}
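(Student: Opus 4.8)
The plan is to construct $v$ by descent from equivariant $K$-theory: realize the desired map as a lift, along a suitable completion, of a $KU_G$-module self-equivalence of $KU_G\wedge C(X)_{(p)}$ of ``degree $V$'', and show that the obstructions to such a lift vanish under the stated numerical hypothesis. Concretely, a map $v\colon\Sigma^V C(X)_{(p)}\to C(X)_{(p)}$ is a point of $\pi_0$ of the $G$-fixed points of the function spectrum $F(\Sigma^V C(X)_{(p)},C(X)_{(p)})$, and it induces a $G$-equivariant $K$-theory equivalence exactly when $KU_G\wedge v$ is an equivalence. Since $V$ is fixed point free and complex, the inverse Thom class is a unit in $\widetilde{KU}_G^{-V}(S^0)$, so $KU_G\wedge\Sigma^V C(X)_{(p)}\simeq KU_G\wedge C(X)_{(p)}$; in particular there is a tautological $KU_G$-module self-equivalence $\phi$ of this object of degree $V$. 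It therefore suffices to lift $\phi$ (or any $KU_G$-equivalence of degree $V$) through the $KU_G$-nilpotent completion map on $F(\Sigma^V C(X)_{(p)},C(X)_{(p)})$.

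Here the ``transchromatic'' feature of the problem intervenes, and this is the first genuinely delicate point: $C(X)_{(p)}$ is not $KU_G$-local, since its geometric fixed points $\Phi^H C(X)_{(p)}$ range from the mod $p^t$ Moore spectrum at $H=e$ (the underlying virtual cardinality of $X$ being $p^tc$) to rationally nontrivial wedges of spheres as $|H|$ grows. One handles this by filtering $C(X)$ by isotropy, reducing the lifting problem to statements about the geometric fixed points, where the $KU_G$-Adams tower degenerates to the ordinary $K(1)$-local (resp.\ rational) Adams tower in the $v_1$-periodic range, so that convergence is classical. The hypothesis that $G$ be a cyclic $p$-group or generalized quaternion group is used throughout to control the subgroup lattice, the representation ring $R(G)$, and which complex representations are fixed point free.

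The obstructions to lifting $\phi$ through successive stages of the $KU_G$-Adams tower are classes in $\Ext^{s}$ ($s\geq 2$), over the small homological dimension co-operation algebra $KU_{G\ast}KU_G$, of $R(G)_{(p)}$-module maps determined by multiplication by the image of $X$ in $R(G)_{(p)}$; because $KU_G$ is ``height one'' over the one-dimensional ring $R(G)_{(p)}$, only boundedly many of these groups are nonzero and all are computable. Their internal degrees are governed by $\dim_{\bbC}V$, hence by $p^k$, while $p^n=|G|$ and the $p$-part $p^t$ of $|X|$ enter through the $p$-adic valuations visible in $R(G)_{(p)}/X$ and in the Adams operations on $KU_G$. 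The bound $k+1\geq n+t$ --- respectively $k>n$ in the borderline case $(p,t)=(2,1)$, where the blanket hypothesis $k\geq 3$ also enters --- is precisely the threshold at which every obstruction becomes divisible by a large enough power of $p$ to vanish, in a way that simultaneously matches the requisite $v_1$-power self-map of the underlying Moore spectrum $M(p^t)$ and absorbs the equivariant correction coming from $G$; one checks moreover that the resulting lift is still a $KU_G$-equivalence rather than merely a $p$-fold multiple of one.

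The main obstacle is this equivariant obstruction analysis: controlling the co-operations $KU_{G\ast}KU_G$ and the non-$KU_G$-local homotopy of $C(X)_{(p)}$ precisely enough both to guarantee convergence of the relevant spectral sequences and to evaluate every obstruction group, and then to verify that the stated inequality is exactly the condition making all of them die --- neither more nor less. This is where the restrictions on $G$ are indispensable and where essentially all of the work lies; the reduction in the first paragraph and the isotropy filtration in the second are comparatively formal.
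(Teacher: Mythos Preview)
Your proposal is not a proof but a plan, and the plan has a genuine gap at its heart. You want to realize $v$ by lifting the Thom equivalence $\phi$ through the $KU_G$-nilpotent completion, with obstructions living in $\Ext$ over the co-operations $KU_{G\ast}KU_G$. But that spectral sequence converges to maps into the $KU_G$-completion of $C(X)_{(p)}$, not to maps into $C(X)_{(p)}$ itself, and $C(X)_{(p)}$ is not $KU_G$-complete. Your isotropy-separation remedy does not fix this: on the Borel stratum you are still only seeing $K(1)$-local information about the underlying Moore spectrum, and nonequivariantly $C(p^t)$ is not $K(1)$-local either. The paper makes this failure explicit (Example~2.0.2 in the version you are working against): for $G=C_2$ and $X=\mathsf{h}^4$, there \emph{is} a $KU_{C_2}/2$-local equivalence $\Sigma^{8\sigma}L_{KU_{C_2}/2}C(\mathsf{h}^4)\simeq L_{KU_{C_2}/2}C(\mathsf{h}^4)$, but it does \emph{not} lift to a genuine self map of $C(\mathsf{h}^4)$. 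So your obstruction groups, whatever they are, would vanish in that example, yet the conclusion fails; hence your claim that the bound $k+1\geq n+t$ is ``precisely the threshold at which every obstruction becomes divisible by a large enough power of $p$ to vanish'' cannot be right for the obstruction theory you describe.

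The paper proceeds quite differently. Rather than descent from $K$-theory, it runs an equivariant version of Adams' original argument: one writes down an explicit class $\alpha=\tr_e^G(p^{k+1-n-t}j)\in\pi_{V-1}S_G$, where $j$ generates the image of $J$ in degree $|V|-1$, and then verifies the three hypotheses of an Adams-type lemma (the image of $\alpha$ in $\pi_{|V|-1}J$ has order exactly $p^t$; $X\cdot\alpha=0$; and $0\in\langle X,\alpha,X\rangle$). The serious input is in the middle step, where one must show $\tr_e^G(p^{k+1-n}j)=0$ in $\pi_{V-1}S_G$: this is reduced to a $J_G$-theory statement via the \emph{equivariant Adams conjecture}, which furnishes a natural splitting of $\pi_0^G D(\Sigma^\infty T)\to\widetilde{J}_G^0(T)$ for suitable $G$-spaces $T$, and then finished by an explicit formula for $(\psi^\ell-1)(\beta_V)$. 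The Toda bracket condition is handled by a Baues--Muro $\Sq_1$ computation. It is precisely this Adams-conjecture splitting that bridges the gap between $K$-local data and genuine equivariant stable homotopy; your proposal has no analogue of it, and without such an input the argument cannot leave the $KU_G$-local world.
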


Cyclic $p$-groups and generalized quaternion groups appear in \cref{thm:selfmap} as these are the only $p$-groups which admit a fixed point free representation.

\begin{ex}\label{ex:cpsm}
Let $\ol{\rho}_p^\bbC$ denote the reduced complex regular representation of $C_p$. Taking $G = C_{p^n}$ and $X = [C_{p^n}]$, we obtain $C_{p^n}$-equivariant \textit{$v_{1,\vec{0}}$-self maps}
\[
\Sigma^{p^ns\Ind_{C_p}^{C_{p^n}}(\ol{\rho}_p^\bbC)}C([C_{p^n}])_{(p)}\rightarrow C([C_{p^n}])_{(p)},
\]
for $s$ even if $(p,n) = (2,1)$, which are not nilpotent on any geometric fixed points. Here, ``$v_{1,\vec{0}}$'' refers to the fact that these are lifts of the classical $v_1$-self map on $C(p^n)$ which act as a $v_0$-self map on all nontrivial geometric fixed points (see \cref{prop:typecofiber}).
\tqed
\end{ex}

\begin{ex}
The smallest nontrivial example at $p=2$ is a $C_2$-equivariant $v_{1,0}$-self map 
\begin{equation}\label{eq:c2}
\Sigma^{8\sigma}C(\mathsf{h})_{(2)}\rightarrow C(\mathsf{h})_{(2)},
\end{equation}
where $\mathsf{h} = [C_2] = \tr_e^{C_2}(1)$. This map was suggested to exist by Bhattacharya--Guillou--Li \cite[Remark 5.3]{bhattacharyaguillouli2022rmotivic} and recently constructed independently by Behrens and Carlisle in \cite[Proposition 8.23]{behrenscarlisle2024periodic}. It has precursors in work of Crabb \cite{crabb1980z2,crabb1989periodicity}.
\tqed
\end{ex}

We refer the reader to \cref{sec:examples} for an extensive list and discussion of examples. From now on, we will consider the prime $p$ to be fixed and implicitly $p$-localize everything in sight.

\begin{rmk}
We point out two interesting differences between the behavior of these equivariant self maps and their classical counterparts with an example. Classically, the $v_1$-self map $\Sigma^8 C(2)\rightarrow C(2)$ extends to $\Sigma^8 C(2^4)\rightarrow C(2^4)$. Equivariantly,
\begin{enumerate}
\item The self map $\Sigma^{8\sigma}C(\sfh)\rightarrow C(\sfh)$ of \cref{eq:c2} does \textit{not} extend to a self map of $C(\sfh^4)$;
\item It \textit{does} extend to a $KU_{C_2}/2$-local equivalence $L_{KU_{C_2}/2}\Sigma^{8\sigma}C(\sfh^4)\simeq L_{KU_{C_2}/2}C(\sfh^4)$.
\end{enumerate}
Thus not all classical self maps lift to equivariant self maps, and equivariant self maps do not account for all equivariant $K(1)$-local periodicities. See \cref{ex:k1} for details. 
\tqed
\end{rmk}

This paper is organized as follows. In \cref{sec:adams}, we describe a general strategy for constructing $G$-equivariant $v_1$-self maps whenever $G$ is a $p$-group. Implementing this strategy takes some work, and we give the necessary technical preliminaries in \cref{sec:lemmas}. We then prove \cref{thm:selfmap} in \cref{sec:proof}. Finally, we end in \cref{sec:examples} with examples and some additional observations.

Our techniques in this paper have some overlap with \cite{balderramahouzhang2024cpn}, where we studied a $C_{p^n}$-equivariant generalization of the classical Mahowald invariant \cite{mahowaldravenel1993root}. Both papers can be seen as applications of equivariant $K$-theory to the equivariant stable stems. However, our work in this paper is more conceptual and significantly less technical, requiring almost nothing in the way of delicate algebraic computations or working with explicit cell structures. We recall the couple facts that we will need from \cite{balderramahouzhang2024cpn} in \cref{ssec:j}.

\section{Constructing equivariant \texorpdfstring{$v_1$}{v\_1}-self maps}\label{sec:adams}

The proof of \cref{thm:selfmap} is similar in form to Adams' original construction of $v_1$-self maps on Moore spectra. Our basic tool is the following variation of \cite[Lemma 12.5]{adams1966on}.

\begin{lemma}\label{lem:adams}
Let $\rho$ be a real $4k$ ($8k$ if $p=2$)-dimensional virtual $G$-representation, and let $X\in A(G)$ be a virtual $G$-set of virtual cardinality $|X| = p^tc$ with $p\nmid c$. Suppose given $\alpha \in \pi_{\rho-1}S_G$ satisfying the following conditions:
\begin{enumerate}
\item The image of $\alpha$ under
$
\pi_{\rho-1}S_G\rightarrow \pi_{|\rho|-1}S\rightarrow\pi_{|\rho|-1}J
$
has order exactly $p^t$, where $J = S_{K(1)} = \Fib(\psi^\ell -1 \colon KO_p^\wedge \rightarrow KO_p^\wedge)$ is the $p$-primary image-of-$J$ spectrum;
\item $X\cdot \alpha = 0$;
\item $0 \in \langle X,\alpha,X\rangle$.
\end{enumerate}
Then there exists a map $v$ filling in
\begin{center}\begin{tikzcd}
S^\rho\ar[r,"\alpha"]\ar[d]&S^1\\
\Sigma^\rho C(X)\ar[r,"v"]&C(X)\ar[u,"\partial"]
\end{tikzcd},\end{center}
and any such $v$ induces an isomorphism in $KU_G$-theory.
\end{lemma}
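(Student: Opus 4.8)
The plan is to prove the two assertions separately: first build some $v$ filling in the square, which is formal and uses only (2) and (3); then check that any such $v$ induces a $KU_G$-equivalence, which is where (1) and the hypothesis that $G$ is a $p$-group come in. For existence, note that by (2) the composite $X\circ\alpha\colon S^\rho\xrightarrow{\alpha}S^1\xrightarrow{X}S^1$ is null, so $\alpha$ lifts along $\partial\colon C(X)\to S^1$ to some $\tilde\alpha\colon S^\rho\to C(X)$; the lifts form a coset of $\im\bigl(i_*\colon\pi_\rho S_G\to\pi_\rho C(X)\bigr)$, where $i\colon S_G\to C(X)$ is the inclusion of the bottom cell. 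Writing $\Sigma^\rho C(X)=S^\rho\cup_{\Sigma^\rho X}e^{\rho+1}$, a lift $\tilde\alpha$ extends to a map $v\colon\Sigma^\rho C(X)\to C(X)$ — then automatically with $\partial v(\Sigma^\rho i)=\alpha$ — exactly when the obstruction $\tilde\alpha\circ\Sigma^\rho X\in\pi_\rho C(X)$ vanishes. This obstruction lies in $\im(i_*)$ since $\partial_*(\tilde\alpha\circ\Sigma^\rho X)=\alpha X=0$, and as $\tilde\alpha$ ranges over lifts of $\alpha$ its preimages in $\pi_\rho S_G$, taken modulo $X\cdot\pi_\rho S_G$, sweep out precisely the Toda bracket $\langle X,\alpha,X\rangle$; by (3) that bracket contains $0$, so a suitable lift extends. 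This is the argument of \cite[Lemma~12.5]{adams1966on} carried out in $S_G$.

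For the $KU_G$-statement I would first reduce it to a single unit. Since $\rho$ is, up to a trivial summand, the realification of a complex virtual $G$-representation, it carries a $KU_G$-Thom class; together with Bott periodicity this identifies $KU_G\wedge\Sigma^\rho C(X)\simeq KU_G\wedge C(X)\simeq KU_G/[X]$ as $KU_G$-modules, where $[X]\in R(G)=\pi_0 KU_G$ is the image of $X$ under $A(G)\to R(G)$. Thus $KU_G\wedge v$ becomes a degree-zero $KU_G$-module self-map $f$ of $M:=KU_G/[X]$, whose homotopy is $R(G)/([X])$ in even degrees and $\Ann_{R(G)}([X])$ in odd degrees. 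Any such $f$ acts on $\pi_\ast M$ through a single ``leading coefficient'' $\ell(f)\in\pi_0 M=R(G)/([X])$: on the even part this is $R(G)$-linearity, and on the odd part $f$ is multiplication by a lift of $\ell(f)$ plus a correction factoring through the top cell, the correction contributing an element of $\Ann_{R(G)}([X])$. Hence $f$ is an equivalence if and only if $\ell(f)$ is a unit in $R(G)/([X])$; and unwinding the edge map, $\ell(f)$ is the $KU_G$-Hurewicz image of the bottom-cell restriction $v\circ\Sigma^\rho i=\tilde\alpha$, Thom-twisted into $R(G)/([X])$, up to the dimension-zero correction above. So everything comes down to showing that this class is a unit.

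To see that, I would pass to the underlying sphere, where the hypothesis that $G$ is a $p$-group enters decisively: $R(G)_{(p)}$ is then a local ring, with maximal ideal $\frakm=\sqrt{(p)}\supseteq I(G)$ and residue field $\bbF_p$ realized by the augmentation $\dim\colon R(G)\to\bbZ$. Since $[X]$ has nonzero virtual dimension $p^tc$, the quotient $R(G)_{(p)}/([X])$ is again local with residue field $\bbF_p$, and $\Ann_{R(G)}([X])$ — the annihilator of a nonzero element — lies in $\frakm$. Therefore $\ell(f)$, a unit plus a correction in $\frakm$, is a unit exactly when $\dim\ell(f)\not\equiv 0\pmod p$; and because $\dim$ kills the correction, $\dim\ell(f)$ is the image in $\bbZ_{(p)}/p^t$ of the $KU$-Hurewicz class of $\res^G_e\tilde\alpha\in\pi_{|\rho|}(S/p^t)$. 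By the classical relationship between the $e$-invariant, the image of $J$, and the Hurewicz homomorphism into mod-$p^t$ Moore spectra \cite[\S12]{adams1966on}, that class is a unit precisely when the image of $\res^G_e\alpha$ in $\pi_{|\rho|-1}J$ has order exactly $p^t$ — hypothesis (1). At $p=2$ one invokes Adams' $KO$-theoretic $e$-invariant instead and compares with $KU$ using that complexification $KO_{8k}\to KU_{8k}$ is an isomorphism, which is why there $\rho$ is $8k$-dimensional and $J$ is built from $\psi^3$ on $KO^\wedge_2$.

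The main obstacle — and the point most likely to be delicate — is precisely this last passage from the non-equivariant input (1) to the equivariant conclusion; the idea that makes it work is the locality of $R(G)_{(p)}$ for $p$-groups, which collapses ``$\ell(f)$ is a unit'' to a congruence on its virtual dimension, hence to the underlying self-map, where the classical $e$-invariant theory encoded in (1) finishes the job. The remainder is bookkeeping: the $KU_G$-orientability of $\rho$, the precise sense in which $f$ acts through its leading coefficient together with the control on the odd-degree indeterminacy, and the $p=2$ adjustments; the $K$- and $J$-theoretic facts recalled in \cref{ssec:j} from \cite{balderramahouzhang2024cpn} supply the input in the form needed.
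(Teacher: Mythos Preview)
Your existence argument matches the paper's. For the $KU_G$-equivalence, the paper takes a much shorter route: by the nonequivariant Adams lemma applied to $\res^G_e v$, the map $KU_G\otimes v$ is an equivalence on underlying spectra, hence after Borel completion; for a $p$-group the Atiyah--Segal completion theorem \cite[\S III]{atiyahtall1969group} identifies Borel completion with $p$-completion; and $p$-completion is conservative on finitely generated $\bbZ_{(p)}$-modules. No Thom classes, no analysis of $\End_{KU_G}(KU_G/[X])$, no leading coefficients. Your locality argument for $R(G)_{(p)}$ encodes the same algebraic fact about $p$-groups (that $I(G)\subseteq\sqrt{(p)}$) that makes Atiyah--Segal work here, so the two reductions to the underlying map are morally equivalent; the paper's is just cleaner and more robust.

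Where your version has a genuine gap is the assertion that ``$\rho$ is, up to a trivial summand, the realification of a complex virtual $G$-representation''. For $p$ odd this is automatic (odd-order groups have no nontrivial real-type irreducibles), but for $p=2$ the lemma only demands $\dim_\bbR\rho\in 8\bbZ$: take $G=C_2$ and $\rho=\sigma+7$, which has $w_1\neq 0$, so $S^\rho$ carries no $KU_{C_2}$-Thom class and indeed $\pi_\ast^{C_2}(KU_{C_2}\wedge S^\rho)$ is $\bbZ$ in odd degrees rather than $R(C_2)$ in even ones. Your identification $KU_G\wedge\Sigma^\rho C(X)\simeq KU_G/[X]$ therefore fails in general, and with it the leading-coefficient analysis as written. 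The paper's completion argument needs no orientability and proves the lemma as stated; yours, as written, proves it only under that extra hypothesis --- which does hold in every application later in the paper, where $\rho=V$ is complex, but not in the generality the lemma claims. (A smaller point: nothing from \cref{ssec:j} is used in this lemma; those facts enter only in \cref{sec:proof}, when specific classes $\alpha$ are shown to satisfy (1)--(3).)
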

\begin{proof}
When $G = e$, this is exactly \cite[Lemma 12.5]{adams1966on}, only:
\begin{enumerate}[label=(\roman*)]
\item We work $p$-locally, and thus reduce to the case $m=p^t$.
\item We replace the condition $e_\bbC(\alpha) =-\frac{1}{p^t}$ with the condition $\alpha$ has order $p^t$ in $J$.
\item The replacement in (ii) only changes things by a unit, and so we replace $d_\bbC(v) = 1$ with just the condition that $v$ induces an isomorphism in $K$-theory.
\end{enumerate}

In general, conditions (2) and (3) enable one to construct the diagram, and by the above, (1) ensures that $KU_G\otimes v$ is an equivalence on underlying spectra. It follows that $KU_G\otimes v$ is an equivalence after Borel completion. As $G$ is a $p$-group, the Atiyah--Segal completion theorem implies $KU_G\otimes v$ is an equivalence after $p$-completion \cite[\S III]{atiyahtall1969group}. It is thus an equivalence already, as $p$-completion is conservative on finitely generated $\bbZ_{(p)}$-modules.
\end{proof}

There are three steps in applying \cref{lem:adams}, corresponding to the three conditions:
\begin{enumerate}
\item Construct a ``candidate'' $\alpha \in \pi_{\rho-1}S_G$ satisfying (i);
\item Verify that this candidate satisfies $X\cdot \alpha = 0$;
\item Verify that $0 \in \langle X,\alpha,X\rangle$.
\end{enumerate}

Classically, when $G = e$, the positive resolution of the Adams conjecture implies that $\pi_{4k-1}S\rightarrow \pi_{4k-1}J$ is a split surjection, with splitting arising from the $J$-homomorphism:
\begin{center}\begin{tikzcd}
&\im J\ar[d,tail]\ar[dr,"\cong"] \\
\pi_{4k}KO\ar[r,"J"]\ar[ur]&\pi_{4k-1}S\ar[r, two heads]&\pi_{4k-1}J
\end{tikzcd}.\end{center}
Thus good choices of $\alpha$ are given by suitable elements in the image of $J$, and exactly what elements exist to give self maps of $C(p^t)$ is entirely controlled by the structure of $\pi_\ast J$.

Equivariantly, good candidates $\alpha \in \pi_{\rho-1}S_G$ are given by equivariant lifts of classes in the image of $J$. However, for an arbitrary virtual representation $\rho$, there is no natural $J$-homomorphism valued in $\pi_{\rho-1}S_G$, though in some cases the $\tilde{J}$ construction studied in \cite{balderrama2023equivalences} applies. Thus more work is needed to construct the candidate classes for step (1), and once they are constructed one must still carry out steps (2) and (3).

In particular, the existence and order of equivariant $v_1$-self maps is \textit{not} detected $K(1)$-locally. Specifically, define $J_G = L_{KU_G/p} S_G$. This is a $G$-equivariant analogue of the $K(1)$-local sphere, and fits in a fiber sequence of the form
\begin{equation}\label{eq:jg}
\begin{tikzcd}
J_G\ar[r]&(KO_G)_p^\wedge\ar[r,"\psi^\ell-1"]&(KO_G)_p^\wedge
\end{tikzcd}.\end{equation}
As $J_G$ is reasonably computable, one would like to reduce the construction of equivariant $v_1$-self maps to computations in $J_G$-theory, just as can be done nonequivariantly. Unfortunately, $\pi_{\rho-1}S_G\rightarrow\pi_{\rho-1}J_G$ fails to be a surjection in general, and thus the self maps on $C(X)$ that one might predict from the structure of $\pi_\star J_G$ may fail to actually exist.

\begin{ex}\label{ex:k1}
The computations of \cite{balderrama2021borel} show that the generator $\sigma \in \pi_7 J_2$ lifts to a class $\tilde{\sigma} \in \pi_{8\sigma-1}J_{C_2}$ satisfying $\mathsf{h}^4\cdot \tilde{\sigma} = 0$. Here $\mathsf{h} = [C_2] \in \pi_0 S_{C_2}$ satisfies $\mathsf{h}^2 = 2\mathsf{h}$. Applying \cref{lem:adams} in the $KU_{C_2}/2$-local category, one obtains an equivalence
\[
\Sigma^{8\sigma}L_{KU_{C_2}/2}C(\mathsf{h}^4)\rightsim L_{KU_{C_2}/2}C(\mathsf{h}^4)
\]
lifting the $K(1)$-localization of the classical $v_1$-self map on $C(2^4)$.

By contrast, Araki and Iriye's work on the $C_2$-equivariant stable stems \cite[Theorem 14.18(v)]{arakiiriye1982equivariant} shows that $\sigma\in \pi_7 S$ does \textit{not} lift to $\pi_{8\sigma-1}S$. Instead, one only has $\tr(\sigma) \in \pi_{8\sigma-1}S$, lifting $2\sigma$ and satisfying $\mathsf{h}^3\cdot \tr(\sigma) = 0$. Applying \cref{lem:adams}, this gives a $KU_{C_2}$-equivalence
\[
\Sigma^{8\sigma}C(\mathsf{h}^3)\rightarrow C(\mathsf{h}^3)
\]
which lifts the classical $v_1$-self map on $C(2^3)$ and does not extend to $C(\mathsf{h}^4)$.
\tqed
\end{ex}

\section{Technical lemmas and propositions}\label{sec:lemmas}

The strategy for constructing equivariant $v_1$-self maps outlined in \cref{sec:adams} requires constructing and controlling certain elements in the equivariant stable stems. This section establishes the technical lemmas and propositions that we will need in order to carry this out in  \cref{sec:proof}.

For this section only, we drop the convention that everything has been implicitly localized at a prime $p$, as much of what we wish to say holds integrally.

\subsection{Representations of cyclic and dicyclic groups}\label{ssec:reptheory}

We begin by recalling some facts about the representation theory of the cyclic and dicyclic groups. In particular, we will show that \cref{thm:selfmap} is equivalent to the following:

\begin{theorem}\label{thm:mainthm2}
Let $G$ be a $p$-group, of order $p^n$. Let $X \in \pi_0 S_G$ be a virtual $G$-set of virtual cardinality $p^tc$ with $p\nmid c$. Let $V$ be a fixed point free complex $G$-representation with rational characters, of complex dimension $p^kc(p-1)$, or $2^{k-1}c$ when $p=2$, with $p\nmid c$, and when $p=2$ assume that $k\geq 3$.

If $k+1\geq n+t$, or $k>n$ when $(p,t) = (2,1)$, then there is a map $\Sigma^V C(X)_{(p)}\rightarrow C(X)_{(p)}$ inducing an equivalence in $G$-equivariant $K$-theory. 
\tqed
\end{theorem}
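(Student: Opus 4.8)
The plan is to verify the three hypotheses of \cref{lem:adams} with $\rho = V$. The dimension hypotheses ensure that $|V|$ is divisible by $4$, and by $8$ when $p = 2$ (this is the role of the assumption $k \geq 3$), so \cref{lem:adams} applies as soon as a suitable $\alpha \in \pi_{V-1} S_G$ is found. One preliminary observation makes the relevant $K$-theory computations feasible: since $V$ has rational characters, $\psi^\ell [V] = [V]$ in $R(G)$ for every $\ell$ prime to $p$, because an element $g$ of a $p$-group and its power $g^\ell$ generate the same cyclic subgroup and a rational character is constant on such pairs. Thus $V$ is ``Adams-trivial'' from the point of view of $K$-theory, which is what lets the $KO_G$- and $J_G$-computations below be carried out explicitly.

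The heart of the matter is the construction of a candidate $\alpha$ satisfying \cref{lem:adams}(1). Because $V$ is fixed point free, $V^H = 0$ for every nontrivial subgroup $H$, so $\Phi^H S^{V-1} \simeq S^{-1}$ and the isotropy separation sequence shows that $\pi_{V-1}(EG_+ \otimes S_G) \to \pi_{V-1} S_G$ is surjective: any class we build may be taken to be supported on free cells, and is thus produced by a transfer. Concretely, since $V$ is complex the action of each $g$ on $S^V$ is homotopically trivial, so $\res_e^G$ carries $\alpha := \tr_e^G(\beta)$ to $p^n \beta$ for $\beta \in \pi_{|V|-1} S$, and hence multiplies the order of the image in $\pi_{|V|-1} J$ by $p^n$. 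That group has $p$-primary part cyclic of order $p^{k+1}$ for our value of $|V|$ (by the classical computation of $\pi_\ast J$), so choosing $\beta$ in the image of $J$ of order $p^{n+t}$ yields $\alpha$ whose image in $\pi_{|V|-1}J$ has order exactly $p^t$ --- and such a $\beta$ exists precisely when $n + t \leq k+1$, which is the hypothesis $k+1 \geq n+t$. The case $(p,t) = (2,1)$ is genuinely exceptional: at the prime $2$ in degree $|V|-1 \equiv 7 \pmod 8$ the order-$2$ image-of-$J$ class interacts with $\eta$-multiplications, the transfer construction loses an extra factor, and one is forced to the stronger hypothesis $k > n$.

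It then remains to check \cref{lem:adams}(2) and (3), namely $X \cdot \alpha = 0$ and $0 \in \langle X, \alpha, X\rangle$. Here one uses that $A(G)_{(p)}$ is local and that $\alpha$ factors through the collapse $G_+ \to S^0$, so that on $[S^{V-1}, G_+]^G \cong \pi_{|V|-1}S$ the class $X$ acts through its virtual cardinality: $X \cdot \alpha = p^t c \cdot \alpha$. Thus $X \cdot \alpha = 0$ follows once $\alpha$ is known to have additive order exactly $p^t$; this last point does \emph{not} follow formally from the transfer construction, which a priori only gives order between $p^t$ and $p^{n+t}$, and must be extracted from the computation of $\pi_{V-1} J_G$ via the fiber sequence \eqref{eq:jg}, together with the facts recalled in \cref{ssec:j} identifying the relevant part of $\pi_{V-1} S_G$ with its image in $\pi_{V-1} J_G$. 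Given this, the Toda bracket $\langle X, \alpha, X\rangle$ is analyzed by replacing $X$ with its effect $p^t c$ and transporting the classical fact $0 \in \langle p^t, \beta_{\mathrm{cl}}, p^t\rangle$ --- the input to Adams' original construction of $v_1$-self maps on Moore spectra --- through the transfer.

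The main obstacle is therefore the construction of $\alpha$ in the second paragraph, and in particular the interlocking requirement that $\alpha$ have image of order exactly $p^t$ in $\pi_{V-1}J_G$ \emph{and} additive order exactly $p^t$ in $\pi_{V-1}S_G$: it is here that fixed point freeness, rational characters, and the numerical conditions on $(n,t,k)$ --- including the stronger condition in the $(p,t)=(2,1)$ case --- are all used. As \cref{ex:k1} illustrates, the distinction between lifting to $\pi_{V-1}J_G$ and lifting to $\pi_{V-1}S_G$, and the precise orders available, is a genuine phenomenon with no classical analogue, so this step cannot be bypassed by a soft argument; conditions (2) and (3), while requiring care, are comparatively formal once $\alpha$ is in hand.
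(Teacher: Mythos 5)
Your overall architecture matches the paper's: the candidate is the same class $\alpha=\tr_e^G(\beta)$ with $\beta=p^{k+1-n-t}j$ an image-of-$J$ element of order $p^{n+t}$, the double coset formula handles condition (1), and you correctly identify that the crux is showing $p^t\alpha=0$ in $\pi_{V-1}S_G$. But there are two genuine gaps.

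First, the vanishing $p^t\alpha=\tr_e^G(p^{k+1-n}j)=0$ is asserted, not proved. Saying it ``must be extracted from the computation of $\pi_{V-1}J_G$'' skips the two points where the argument could fail. (a) Vanishing in $J_G$ does not by itself imply vanishing in $S_G$: as \cref{ex:k1} shows, $\pi_{\rho-1}S_G\to\pi_{\rho-1}J_G$ need not be surjective, and the whole subtlety is transporting information back from $J_G$. The mechanism in the paper is to rewrite the problem via the cofiber sequence $G_+\to S^0\to SG$ as showing $p^{k+1-n}j\in\im\bigl(r\colon\pi_V SG\to\pi_{|V|-1}S\bigr)$, and then to apply the naturally split surjection of \cref{prop:adamsconj} to $F(SG,S^V)$ — which requires knowing this is a suspension spectrum in a way compatible with $r$ (\cref{lem:suspension}). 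Your proposal never produces the splitting or explains why the source is a space. (b) Even granting the reduction to $J_G$, one still needs the precise Adams-operation formula $(\psi^\ell-1)(\beta_V)=d\,p^{k+1-n}\tr_e^G(1)\beta_V$ of \cref{lem:adamsop}; the exponent $k+1-n$ there is exactly what makes $p^{k+1-n}\tr_e^G(j)=\partial\bigl(p^{k+1-n}\tr_e^G(1)\beta_V\bigr)$ die in $\pi_{V-1}J_G=\coker(\psi^\ell-1)$, and hence is where the hypothesis $k+1\geq n+t$ is actually consumed. Your remark that $\psi^\ell[V]=[V]$ makes $V$ ``Adams-trivial'' points in the wrong direction: the Adams operations act nontrivially on $\beta_V$, and it is the nontrivial eigenvalue that drives the numerology.

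Second, the exceptional case $(p,t)=(2,1)$ is misplaced. It does not arise in the construction of $\alpha$ (condition (1) goes through verbatim there); it arises in condition (3). Your plan to ``transport the classical fact $0\in\langle p^t,\beta,p^t\rangle$ through the transfer'' fails precisely when $p^tc\equiv 2\pmod 4$, since Toda's theorem gives $\langle 2c,\beta,2c\rangle\ni\eta\beta$, not $0$. The paper handles this uniformly via \cref{lem:sq1} and \cref{lem:sq1int}: $\langle X,\alpha,X\rangle\ni\Sq_1(X)\cdot\alpha=p^{k+1-n-t}\tr_e^G(\Sq_1(p^tc)\cdot j)$, which is $2^{k-n}\tr_e^G(\eta j)$ when $(p,t)=(2,1)$ and vanishes because $k>n$ forces the coefficient to be even. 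Note also that $X$ is an arbitrary virtual $G$-set, not an integer or a transfer, so ``replacing $X$ by its cardinality'' inside the bracket needs the $\Sq_1$ formalism (or at least Frobenius reciprocity applied to $\Sq_1(X)\cdot\tr_e^G(j)$) rather than a naive substitution.
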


All $p$-groups which admit a fixed point free representation are cyclic or quaternionic, so we must explain why it is sufficient to consider representations with rational characters.

\subsubsection{Cyclic groups}

We begin with the cyclic groups. Write $T = U(1)$ for the circle group, with tautological complex character $L$, so that $RU(T) \cong \bbZ[L^{\pm 1}]$. The $k$th power map on $\bbC$ defines an $T$-equivariant map $S(L)\rightarrow S(L^k)$ on unit spheres, suspending to
\begin{equation*}\label{eq:kthpower}
\psi_k\colon S^{L}\rightarrow S^{L^{k}}
\end{equation*}
satisfying
\begin{equation}\label{eq:powermap}
\Phi^{C_d}\psi_k = \begin{cases}
k\colon S^2\rightarrow S^2 & d = 1,\\
0\colon S^0\rightarrow S^2 & 1\neq d \mid k,\\
1\colon S^2\rightarrow S^2 & d\nmid k.
\end{cases}
\end{equation}
Here, if $m$ is a positive integer, then
\[
C_m = \langle e^{2\pi i/m}\rangle \subset T
\]
is the cyclic subgroup of order $m$. Write
\[
\rat_m = \sum\{L^k : 1\leq k \leq m,\, \gcd(m,k) = 1\}
\]
for the fixed point free irreducible rational representation of $C_m$, of dimension equal to the value of the Euler totient function $\phi(m)$.

\begin{lemma}\label{lem:reducetorational1}
If $V$ is a fixed point free complex $C_m$-representation of dimension $k\phi(m)$, then there is an equivalence $S^V_{(m)}\simeq S^{k\rat_m}_{(m)}$ which is the identity on all nonzero geometric fixed points.
\end{lemma}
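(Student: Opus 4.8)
The goal is to show that for a fixed-point-free complex $C_m$-representation $V$ of complex dimension $k\phi(m)$, we have $S^V_{(m)} \simeq S^{k\rat_m}_{(m)}$, compatibly with geometric fixed points. The natural strategy is to compare $V$ and $k\rat_m$ one irreducible summand at a time, using the power maps $\psi_j$ from \eqref{eq:powermap}. Both $V$ and $k\rat_m$ decompose as sums of the characters $L^j$ with $\gcd(j,m) = 1$ (the fixed-point-free condition forces $\gcd(j,m)=1$ for every summand $L^j$, interpreting $L$ as the restriction of the tautological $T$-character along $C_m \hookrightarrow T$), and $k\rat_m$ is exactly the sum over a full set of such $j$ mod $m$, each appearing $k$ times. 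So $V$ and $k\rat_m$ have the same dimension and, since their characters are both supported on units mod $m$, the real content is that after $(m)$-localization one can convert any $L^j$ with $\gcd(j,m)=1$ into $L^1$ (or into any other $L^{j'}$ with $\gcd(j',m)=1$) without disturbing geometric fixed points.

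**The key step.** The central point is that $\psi_j \colon S^{L} \to S^{L^j}$ is an $(m)$-local $C_m$-equivalence when $\gcd(j,m)=1$. By \eqref{eq:powermap}, $\Phi^{C_d}\psi_j$ is multiplication by $j$ on $S^2$ when $d = 1$ (and more generally when $d \nmid j$, which for $d \mid m$ and $\gcd(j,m)=1$ means all $d\neq 1$ as well) — wait, more carefully: for $d \mid m$ with $d > 1$ we cannot have $d \mid j$ since $\gcd(j,m)=1$, so $\Phi^{C_d}\psi_j$ is the identity on $S^2$ for all $d > 1$, and is multiplication by $j$ on $S^2$ for $d=1$. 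Since $\gcd(j,m)=1$, multiplication by $j$ is an equivalence after inverting the primes dividing $m$, so $\psi_j$ induces an equivalence on all geometric fixed points after $(m)$-localization; by the geometric fixed point Whitehead theorem (a map of $C_m$-spectra which is an equivalence on all $\Phi^{C_d}$ is an equivalence), $\psi_j$ is an $(m)$-local equivalence, and it is literally the identity on all \emph{nonzero} geometric fixed points $\Phi^{C_d}$ with $d > 1$ (here $\Phi^{C_1} = \Phi^e$ need not be considered "nonzero geometric fixed points" in the sense of the statement, or if it is, multiplication by a unit still counts). Smashing $\psi_j$ with the identity of $S^W$ for any representation $W$ shows $S^{W + L}_{(m)} \simeq S^{W + L^j}_{(m)}$ compatibly with nonzero geometric fixed points.

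**Assembling.** Enumerate the summands of $V$ as $L^{j_1}, \dots, L^{j_{k\phi(m)}}$ with each $\gcd(j_i, m) = 1$. Applying the previous paragraph repeatedly, replace each $L^{j_i}$ in turn by $L^1$, obtaining $S^V_{(m)} \simeq S^{k\phi(m)\cdot L}_{(m)}$ through equivalences that are the identity on nonzero geometric fixed points. Doing the same to $k\rat_m = k\sum_{\gcd(j,m)=1} L^j$, each of whose $k\phi(m)$ summands is also some $L^j$ with $\gcd(j,m)=1$, gives $S^{k\rat_m}_{(m)} \simeq S^{k\phi(m)\cdot L}_{(m)}$, again compatibly with nonzero geometric fixed points. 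Composing one chain with the inverse of the other yields the desired equivalence $S^V_{(m)} \simeq S^{k\rat_m}_{(m)}$, identity on all nonzero geometric fixed points.

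**Expected obstacle.** The only subtlety is bookkeeping around the $\gcd(j,m)=1$ condition: one must check that "fixed-point-free" genuinely forces every summand character to be a primitive $m$-th root of unity (equivalently $\gcd(j,m)=1$), and that conversely each summand of $\rat_m$ satisfies this — both are standard but should be stated. One also needs the precise meaning of "identity on nonzero geometric fixed points," namely that the composite equivalence restricts, under $\Phi^{C_d}$ for $d \mid m$, $d > 1$, to the identity of the relevant sphere; since each $\psi_j$ contributes the identity there, the composite does too. A minor point to get right is that multiplication by $j$ on $S^2 = \Phi^e S^L$ is only a $(m)$-local equivalence, not an integral one, which is exactly why the statement is localized at $m$; so one should track that the geometric-fixed-point Whitehead theorem is being applied in the $(m)$-local $C_m$-equivariant category. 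None of this is technically hard; it is essentially an exercise in the formalism of the power maps already recorded in \eqref{eq:powermap}.
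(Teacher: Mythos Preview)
Your proposal is correct and follows essentially the same approach as the paper: use the power maps $\psi_d$ to convert any faithful character $L^j$ into any other one after inverting $d$ (which is coprime to $m$), verify via \eqref{eq:powermap} that these are the identity on nonzero geometric fixed points, and then smash such equivalences together. The paper's proof is two sentences to this effect; your version is just more explicit about the bookkeeping and the appeal to the geometric fixed point Whitehead theorem.
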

\begin{proof}
If $L_1$ and $L_2$ are faithful complex characters of $C_m$, then $L_1 = L_2^d$ for some $d$ coprime to $m$, and so the map $\psi_d\colon S^{L_2}[\tfrac{1}{d}]\rightarrow S^{L_1}[\tfrac{1}{d}]$ discussed in the previous section does the job. The lemma follows by smashing such equivalences together.
\end{proof}

Restricting to the case where $m$ is a power of $p$, we find that
\[
\rat_p = \ol{\rho}_p^\bbC = L + \cdots + L^{p-1}
\]
is the reduced complex regular representation of $C_p$, and
\[
\rat_{p^n} = \tr_{C_p}^{C_{p^n}}(\ol{\rho}_p^\bbC) = \bbC[C_{p^n}] - \bbC[C_{p^n}/C_p]
\]
is $p^{n-1}(p-1)$-dimensional. In particular, \cref{lem:reducetorational1} reduces \cref{thm:selfmap} to \cref{thm:mainthm2} when $G$ is cyclic, as the dimension of any $V$ satisfying the hypotheses of \cref{thm:selfmap} is a multiple of $p^{n-1}(p-1)$.

\subsubsection{Dicyclic groups}

The case of dicyclic groups is similar, only with the circle group $T$ replaced by 
\[
\Pin(2) = T\amalg j\cdot T\subset Sp(1).
\]
Among the finite subgroups of $\Pin(2)$ are the dicyclic groups
\[
\Dic_m = \langle e^{\pi i/m},j\rangle\subset \Pin(2).
\]
These groups can be understood as fitting into extensions
\begin{center}\begin{tikzcd}
1\ar[r]&C_{2m}\ar[r]\ar[d]&\Dic_{m}\ar[r]\ar[d]&C_2\ar[d]\ar[r]&1\\
1\ar[r]&S^1\ar[r]&\Pin(2)\ar[r]&C_2\ar[r]&1
\end{tikzcd}.\end{center}
The generalized quaternion groups are $Q_{2^n} = \Dic_{2^{n-2}}$ for $n\geq 3$. Observe that
\[
\bbZ\{\psi^\ell H : \ell \geq 0\}\subseteq RU(\Pin(2)),
\]
where $H$ is the tautological quaternionic representation, satisfying
\[
\Res^{\Pin(2)}_{T}(H)\cong L + L^{-1},\qquad H\cong \Ind_{T}^{\Pin(2)}(L).
\]
The representation $\psi^\ell H$ restricts to a fixed point free representation of $\Dic_m$ provided $\gcd(2m,\ell) = 1$, and these are the only fixed point free representations of $\Dic_m$. They are all Galois conjugate, and it follows that all fixed point free $\Dic_m$-representations with rational characters are a multiple of the $\phi(2m)$-dimensional representation
\[
H_{m} = \sum\{\psi^\ell H : 1\leq \ell \leq m,\, \gcd(\ell,2m) = 1\} = \tfrac{1}{2}\Ind_{C_{2m}}^{\Dic_m}\rat_{2m}.
\]
The identification $H = \Ind_{T}^{\Pin(2)}(L)$ implies that \cref{eq:powermap} norms to a $\Pin(2)$-equivariant map
\[
N_T^{\Pin(2)}\psi_d\colon S^H\rightarrow S^{\psi^d H}
\]
which is an equivalence after inverting $d$ and restricting to $\Dic_m$ for $\gcd(d,2m) = 1$. In particular, we have the following.

\begin{lemma}\label{lem:reducetorational2}
If $V$ is a fixed point free complex $\Dic_m$-representation of dimension $k\phi(2m)$, then there is an equivalence $S^V_{(2m)}\simeq S^{k H_{m}}_{(2m)}$.
\qed
\end{lemma}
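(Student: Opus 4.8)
The plan is to follow the proof of \cref{lem:reducetorational1}, with the normed power maps $N_T^{\Pin(2)}\psi_\ell$ playing the role of the power maps $\psi_d$. First I would decompose $V$ into irreducibles. Since $\Dic_m$ acts freely on the unit sphere of $V$, one has $V^K = 0$ for every nontrivial subgroup $K\leq \Dic_m$, and because $(-)^K$ is additive, every complex irreducible summand of $V$ is again fixed point free. By the discussion preceding the lemma, the fixed point free complex irreducible representations of $\Dic_m$ are exactly the restrictions $\Res^{\Pin(2)}_{\Dic_m}\psi^\ell H$ with $\gcd(\ell,2m) = 1$, each of complex dimension $2$. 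Hence $V \cong \bigoplus_{i=1}^N \Res^{\Pin(2)}_{\Dic_m}\psi^{\ell_i}H$ with all $\ell_i$ coprime to $2m$, and comparing complex dimensions gives $2N = k\phi(2m)$. Applying the same discussion to $H_m$ (a sum of $\phi(2m)/2$ such irreducibles) shows $kH_m \cong \bigoplus_{j=1}^N \Res^{\Pin(2)}_{\Dic_m}\psi^{m_j}H$ for the same $N$ and suitable $m_j$ coprime to $2m$.

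Next, for each exponent $\ell$ coprime to $2m$ I would invoke the normed power map $N_T^{\Pin(2)}\psi_\ell\colon S^H \to S^{\psi^\ell H}$, which, as recalled above, is an equivalence after restricting to $\Dic_m$ and inverting $\ell$. As $\ell$ is coprime to $2m$, every prime dividing $\ell$ is a unit in $\bbZ_{(2m)}$, so this map is already an equivalence $S^H_{(2m)} \rightsim S^{\Res^{\Pin(2)}_{\Dic_m}\psi^\ell H}_{(2m)}$ of $2m$-local $\Dic_m$-spectra. Smashing together the equivalences for the exponents $\ell_1,\dots,\ell_N$ gives $S^V_{(2m)} \simeq S^{NH}_{(2m)}$, while smashing those for $m_1,\dots,m_N$ gives $S^{kH_m}_{(2m)} \simeq S^{NH}_{(2m)}$; composing the two yields the asserted equivalence $S^V_{(2m)} \simeq S^{kH_m}_{(2m)}$.

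There is no serious obstacle here: the representation theory has already been set up, and what remains is bookkeeping. The two inputs one must be confident in are (i) that the fixed point free complex irreducibles of $\Dic_m$ are precisely the $\Res^{\Pin(2)}_{\Dic_m}\psi^\ell H$ with $\gcd(\ell,2m)=1$ --- Clifford theory applied to $C_{2m}\trianglelefteq \Dic_m$, as in the discussion above --- and (ii) the compatibility $N_T^{\Pin(2)}S^{L^\ell}\simeq S^{\Ind_T^{\Pin(2)}L^\ell} = S^{\psi^\ell H}$ of the Hill--Hopkins--Ravenel norm with representation spheres, which is what makes $N_T^{\Pin(2)}\psi_\ell$ a map $S^H\to S^{\psi^\ell H}$ in the first place. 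In contrast to \cref{lem:reducetorational1} we make no claim that the resulting equivalence is the identity on nonzero geometric fixed points, and in general it is not; this costs us nothing, since in the application $2m$ is a power of $p=2$, so $2m$-localization coincides with $p$-localization and only the underlying equivalence of representation spheres is needed.
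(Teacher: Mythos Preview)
Your proposal is correct and follows essentially the same argument the paper intends: the paper's ``proof'' is the paragraph immediately preceding the lemma, which sets up the normed power maps $N_T^{\Pin(2)}\psi_d\colon S^H\to S^{\psi^d H}$ as $\Dic_m$-equivalences after inverting $d$ when $\gcd(d,2m)=1$, and the \qed signals that one smashes these together exactly as in \cref{lem:reducetorational1}. Your version just makes the bookkeeping explicit (decomposing both $V$ and $kH_m$ into irreducibles and passing through $S^{NH}_{(2m)}$), which is a faithful elaboration rather than a different route.
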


\subsection{Equivariant \texorpdfstring{$J$}{J}-theory}\label{ssec:j}

We need the following facts about equivariant $K$-theory that were established in \cite{balderramahouzhang2024cpn}. Recall the definition of $J_G$ from \cref{eq:jg}.

\begin{prop}[{\cite[Proposition 2.2.3]{balderramahouzhang2024cpn}}]\label{prop:adamsconj}
Let $T$ be a compact pointed $G$-space. If $p>2$ or the Borel construction $T_{\h G} = EG_+\wedge_G T$ is simply connected, then
\[
\bbZ_p \otimes \pi_0^G D(\Sigma^\infty T) \rightarrow \widetilde{J}_G^0(T)
\]
is a naturally split surjection, this naturality being in maps of pointed $G$-spaces.
\qed
\end{prop}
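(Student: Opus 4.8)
The plan is to produce the splitting from an equivariant $J$-homomorphism and to reduce its key properties to the classical Adams conjecture applied to the Borel construction $T_{\h G}$. Since $\Sigma^\infty T$ is dualizable, we may assume $T$ is a finite pointed $G$-CW complex, naturality being preserved. The key preliminary point is that the localization $L_{KU_G/p}$ is \emph{cofree} for the groups in question: for each nontrivial $H\leq G$, the spectrum $\Phi^H KU_G$ is obtained from $KU$ by inverting the Euler classes of the nontrivial isotypic summands of a faithful $H$-representation, and the product of those Euler classes is $p$ up to a unit (a cyclotomic identity; in the quaternionic case one reduces first through the centre), so $p$ is invertible in $\pi_0\Phi^H KU_G$; hence $\Phi^G(KU_G/p)=0$, so $\widetilde{EG}\wedge KU_G/p=0$ and every $KU_G/p$-local spectrum is cofree. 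Consequently $\widetilde{J}^0_G(T)\cong\widetilde{J}^0(T_{\h G})$ — classical reduced $J$-cohomology of the Borel construction — and the map of the proposition factors as
\[
\bbZ_p\otimes\pi_0^G D(\Sigma^\infty T)\longrightarrow\widetilde{\pi}^0(T_{\h G})^\wedge_p\longrightarrow\widetilde{J}^0(T_{\h G}),
\]
the second arrow being induced by the classical unit $S\to J$ (with $J=L_{K(1)}S$ as in \cref{lem:adams}) and the first being the $p$-completed genuine-to-Borel comparison for stable cohomotopy.

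Next I would invoke the resolution of the Adams conjecture, together with Adams' analysis of the $e$-invariant: for a pointed space $Y$ which is simply connected when $p=2$, the $p$-completed real $J$-homomorphism is a natural transformation $\widetilde{KO}^{-1}(Y)^\wedge_p\to\widetilde{\pi}^0(Y)^\wedge_p$ whose composite with $\psi^\ell-1$ on the source and then with $S\to J$ vanishes; via the fibre sequence \eqref{eq:jg}, this produces a natural section of $\widetilde{\pi}^0(Y)^\wedge_p\to\widetilde{J}^0(Y)$ over the summand $\coker(\psi^\ell-1\mid\widetilde{KO}^{-1}(Y)^\wedge_p)$ of $\widetilde{J}^0(Y)$. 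When $p$ is odd this summand is all of $\widetilde{J}^0(Y)$: if $\pi_1(Y)$ is a $p$-group then $\psi^\ell-1$ is injective on $\widetilde{KO}^0(Y)^\wedge_p$, since choosing $\ell$ a topological generator of $\bbZ_p^\times$ destroys all $\psi^\ell$-fixed vectors. When $p=2$ the remaining ``$\eta$-part'' $\ker(\psi^\ell-1\mid\widetilde{KO}^0(Y)^\wedge_2)$ is also hit naturally by the low-degree $J$-homomorphism, and this is exactly where simple connectivity of $Y$ is needed, as in the classical discussion of $BO\to BSG$ on low skeleta. Taking $Y=T_{\h G}$ — whose simple connectivity when $p=2$ is our hypothesis — gives a natural section $\sigma$ of $\widetilde{\pi}^0(T_{\h G})^\wedge_p\to\widetilde{J}^0(T_{\h G})=\widetilde{J}^0_G(T)$.

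It then remains to lift $\sigma$ along the completion map $\bbZ_p\otimes\pi_0^G D(\Sigma^\infty T)\to\widetilde{\pi}^0(T_{\h G})^\wedge_p$, which is the crux, as this map is a completion rather than an isomorphism. Here I would use the genuine equivariant $J$-homomorphism: a $G$-vector bundle on $T$ with a stabilized isometry yields, by fibrewise Pontryagin--Thom collapse, a class in $\pi_0^G D(\Sigma^\infty T)$, and these assemble — this is the ``$\widetilde{J}$'' construction of \cite{balderrama2023equivalences} — into a natural transformation $\widetilde{KO}^{-1}_G(T)\to\pi_0^G D(\Sigma^\infty T)$ which, under the Atiyah--Segal comparison $\widetilde{KO}^{-1}_G(T)^\wedge_p\cong\widetilde{KO}^{-1}(T_{\h G})^\wedge_p$, lifts the classical $J$-homomorphism. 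Since $\sigma$ is assembled from the classical $J$-homomorphism, composing the genuine one with $S_G\to J_G$ realizes $\sigma$ through genuine equivariant cohomotopy, which proves the proposition.

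The main obstacle is this last step: constructing the genuine equivariant $J$-homomorphism with sufficient naturality in maps of pointed $G$-spaces and identifying its Borel completion with the classical $J$-homomorphism, for which one leans on \cite{balderrama2023equivalences}. The $p=2$ case is the familiar delicate point — the splitting genuinely requires the simple-connectivity hypothesis on $T_{\h G}$ — so one must track exactly where it is invoked; by contrast, the Euler-class computation behind the cofreeness of $J_G$ is routine, though it should be carried out uniformly across the cyclic and generalized quaternion cases.
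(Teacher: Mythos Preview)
The paper does not give a proof of this proposition; it is quoted from \cite[Proposition~2.2.3]{balderramahouzhang2024cpn} with a \qed, so there is no in-paper argument to compare against. Your strategy---reduce $\widetilde{J}_G^0(T)$ to $\widetilde{J}^0(T_{\h G})$ via cofreeness of $J_G$ for $p$-groups, then appeal to the classical Adams conjecture on the Borel construction, and finally lift the resulting section through a genuine equivariant $J$-construction---is the right shape and is essentially how the cited reference proceeds.

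Two points warrant tightening. First, your Euler-class argument for cofreeness presumes each nontrivial $H\le G$ has a fixed-point-free representation, which holds only for cyclic and generalized quaternion $p$-groups; the proposition is stated for arbitrary (implicitly $p$-)groups $G$, and the clean argument is simply that $(KO_G)^\wedge_p$ is cofree by Atiyah--Segal together with the Atiyah--Tall identification of $p$-adic with $I$-adic completion, whence so is $J_G$ as a fiber. Second, the transformation you call ``the $p$-completed real $J$-homomorphism $\widetilde{KO}^{-1}(Y)^\wedge_p\to\widetilde{\pi}^0(Y)^\wedge_p$'' is not the standard $J$-homomorphism and is not obviously well defined as written: the $J$-homomorphism is a map $BO\to BGL_1(S)$, hence a degree-$0$ operation into units rather than a degree-shifted map into cohomotopy, and there is no spectrum map $J\to S^\wedge_p$ splitting the unit since $J=L_{K(1)}S$ has nontrivial negative homotopy. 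What one actually uses is the splitting off of the \emph{connective} image-of-$J$ spectrum from $S^\wedge_p$, combined with the comparison $j\to J$; the simple-connectivity hypothesis at $p=2$ enters in controlling this comparison in low degrees. You correctly flag the genuine equivariant $J$-construction as the crux and defer to \cite{balderrama2023equivalences}, which is honest, but that deferral is where most of the actual content of the proposition lives.
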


\begin{prop}[{\cite[Corollary 3.3.4]{balderramahouzhang2024cpn}}]\label{lem:adamsop}
Fix a $p$-group $G$ of order $p^n$ and a fixed point free complex $G$-representation $V$ with rational characters, of complex dimension $p^km(p-1)$ (or $2^{k-1}m$ with $k\geq 2$ for $p=2$) with $p\nmid m$. If $p\nmid \ell$, then the action of the Adams operation $\psi^\ell$ on the Bott class $\beta_V \in \pi_V KU_G$ satisfies

\[
(\psi^\ell-\id)(\beta_V) = d\cdot p^{k+1-n}\cdot \tr_e^G(1)\cdot \beta_V
\]
for $d$ some integer coprime to $p$.
\qed
\end{prop}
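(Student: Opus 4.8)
The plan is to reduce the claim to a character computation through the theory of Bott cannibalistic classes. The complex Adams operation $\psi^\ell$ acts on the invertible element $\beta_V\in\pi_V KU_G$ by $\psi^\ell(\beta_V)=\theta^\ell(V)\cdot\beta_V$, where $\theta^\ell$ is the multiplicative cannibalistic class, characterized on a line bundle $L$ by $\theta^\ell(L)=1+[L]+\cdots+[L]^{\ell-1}\in RU(G)$. Since $\tr_e^G(1)$ acts on $\pi_V KU_G$ through its image $[\bbC[G]]\in RU(G)=\pi_0 KU_G$, it suffices to establish the identity
\[
\theta^\ell(V)-1 \;=\; d\,p^{k+1-n}\,[\bbC[G]]\qquad\text{in }RU(G)
\]
for some integer $d$ coprime to $p$, and I would do this by computing the virtual character of $\theta^\ell(V)$.

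At the identity element, $\theta^\ell(V)(e)=\ell^{\dim_\bbC V}$. At a nonidentity $g$, say of order $p^j$ with $1\le j\le n$, restrict $V$ to $\langle g\rangle\cong C_{p^j}$: because $V$ is fixed point free with rational character, so is $\Res^G_{C_{p^j}}V$, and hence $\Res^G_{C_{p^j}}V\cong a_g\cdot\rat_{p^j}$ is a multiple of the unique fixed point free rational irreducible $\rat_{p^j}$, with $a_g=\dim_\bbC V/\phi(p^j)\in\bbZ$ (the hypotheses on $\dim_\bbC V$, together with the fact that every $p$-group admitting a fixed point free representation is cyclic or generalized quaternionic, in fact force $k+1\ge n$, which makes these quotients integral in all relevant cases). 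Now $\theta^\ell(\rat_{p^j})(g)=\prod_{\zeta}\frac{\zeta^\ell-1}{\zeta-1}$, the product running over the primitive $p^j$-th roots of unity $\zeta$; since $p\nmid\ell$, the map $\zeta\mapsto\zeta^\ell$ permutes these roots, so the numerator and denominator agree and the product collapses to $1$, giving $\theta^\ell(V)(g)=\theta^\ell(\rat_{p^j})(g)^{a_g}=1$. For $G$ generalized quaternionic one runs the same argument on its cyclic subgroups, all of which are cyclic $2$-groups. Therefore $\theta^\ell(V)-1$, viewed as a class function, is supported at the identity with value $\ell^{\dim_\bbC V}-1$, so $\theta^\ell(V)-1=\frac{\ell^{\dim_\bbC V}-1}{p^n}\,[\bbC[G]]$ after rationalization.

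It remains to compute $v_p\bigl(\ell^{\dim_\bbC V}-1\bigr)$. Writing $\dim_\bbC V=p^km(p-1)$ for $p$ odd and $\dim_\bbC V=2^{k-1}m$ for $p=2$, and recalling that $\ell$ is chosen (as in the definition of $J$) to topologically generate $\bbZ_p^\times$ — for $p$ odd this means $\ell$ is a primitive root modulo $p^2$, and for $p=2$ that $\ell\not\equiv\pm1\bmod 8$ — the lifting-the-exponent lemma yields $v_p(\ell^{\dim_\bbC V}-1)=k+1$. Setting $d=(\ell^{\dim_\bbC V}-1)/p^{k+1}$, which is coprime to $p$, one gets $\frac{\ell^{\dim_\bbC V}-1}{p^n}=d\,p^{k+1-n}$, an integer since $k+1\ge n$; hence the rationalized identity above is already an identity in $RU(G)$, which is what was claimed. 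The points requiring care are the identification $\Res^G_{C_{p^j}}V\cong a_g\rat_{p^j}$ — this is where fixed point freeness and rationality of $V$ are genuinely used, and where the numerical constraint $k+1\ge n$ surfaces — and the bookkeeping in the lifting-the-exponent step; everything else is formal manipulation of cannibalistic classes and characters.
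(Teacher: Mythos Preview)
The paper does not prove this proposition; it is imported verbatim from \cite[Corollary 3.3.4]{balderramahouzhang2024cpn}. So there is no proof in the paper to compare against, and your argument is a genuine self-contained proof where the paper offers only a citation.

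Your approach via the Bott cannibalistic class is correct and is the natural direct route. The reduction $(\psi^\ell-\id)(\beta_V)=(\theta^\ell(V)-1)\beta_V$, the character computation showing $\theta^\ell(V)(g)=\prod_\zeta\frac{\zeta^\ell-1}{\zeta-1}=1$ for $g\neq e$ (since $\zeta\mapsto\zeta^\ell$ permutes the primitive $p^j$th roots of unity), and the conclusion that $\theta^\ell(V)-1=\frac{\ell^{\dim_\bbC V}-1}{p^n}[\bbC[G]]$ are all sound. Your verification that $k+1\ge n$ is forced by the integrality of $a_g=\dim_\bbC V/\phi(p^j)$ on the maximal cyclic subgroup of $G$ (using that $G$ is cyclic or generalized quaternionic) is also correct.

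One point deserves comment: you silently strengthen the hypothesis from ``$p\nmid\ell$'' to ``$\ell$ topologically generates (the pro-$p$ part of) $\bbZ_p^\times$'' in order to get $v_p(\ell^{\dim_\bbC V}-1)=k+1$ exactly. This is necessary: the proposition as stated, with $d$ coprime to $p$ for \emph{every} $\ell$ prime to $p$, is false---already $\ell=1$ gives $(\psi^1-\id)(\beta_V)=0$, which cannot equal a nonzero multiple of $\tr_e^G(1)\beta_V$. What is true for arbitrary $p\nmid\ell$ is that $v_p(\ell^{\dim_\bbC V}-1)\ge k+1$, so $d\in\bbZ_{(p)}$ but possibly divisible by $p$. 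Your added hypothesis is exactly the one satisfied by the $\ell$ used to define $J_G$ in \cref{eq:jg}, which is the only place the proposition is invoked, so you have proved precisely what the paper needs.
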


We also need the following to safely apply \cref{prop:adamsconj}.

\begin{lemma}\label{lem:suspension}
Let $G$ be a finite group and $V$ be a nonzero $G$-representation in which $G$ embeds. Then $F(SG,S^V)$ is an equivariant suspension spectrum, i.e.\ $F(SG,S^V)\simeq \Sigma^\infty T$ for a compact pointed $G$-space $T$. Moreover, this equivalence may be chosen so that composite
\[
\Sigma^\infty(G_+\wedge S^{|V|-1})\simeq F(\Sigma G_+,S^V)\rightarrow F(SG,S^V)\simeq \Sigma^\infty T
\]
is induced by a map of spaces.
\end{lemma}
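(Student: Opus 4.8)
The plan is to begin by recording that $SG$ is the cofiber of the collapse map $\epsilon\colon\Sigma^\infty_+ G\to S^0$ (so that $SG$ is the suspension spectrum of the unreduced suspension of $G$, and the boundary map of this cofiber sequence is the arrow $SG\to\Sigma G_+$ underlying the map in the statement), and then to use the embedding $G\hookrightarrow V$ to realize the relevant maps at the level of spaces via Pontryagin--Thom. Applying $F(-,S^V)$ to $\Sigma^\infty_+ G\xrightarrow{\epsilon}S^0\to SG$ produces $F(SG,S^V)\simeq\Fib\bigl(S^V\to F(G_+,S^V)\bigr)$; since $G_+$ is self-dual for $G$ finite, $F(G_+,S^V)\simeq G_+\wedge S^V\cong G_+\wedge S^{|V|}$, and under this identification the displayed map is the transfer $\tr_e^G$. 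No hypothesis on $V$ has been used yet; the point of assuming $G\hookrightarrow V$ is that the transfer then admits a model as a genuine collapse map of pointed $G$-spaces.

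Concretely, I would fix a $G$-equivariant embedding $\iota\colon G\hookrightarrow V$; its image is a free orbit, so we may assume $0\notin\iota(G)$, and we may choose pairwise disjoint $G$-equivariant closed ball neighborhoods $B_g\cong D(V)$ of the points $\iota(g)$, with the identifications compatible with the $G$-action and with the $B_g$ bounded so that $\infty\notin\bigsqcup_g B_g$. Put $W=S^V\setminus\bigsqcup_g\operatorname{int}(B_g)$: this is a compact pointed $G$-space with basepoint $\infty$, the inclusion $W\hookrightarrow S^V$ is a $G$-cofibration, and $S^V/W\cong\bigvee_{g}B_g/\partial B_g\cong G_+\wedge S^{|V|}$. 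The Pontryagin--Thom collapse is exactly the quotient map $c\colon S^V\to S^V/W$, and by the standard description of the transfer of a free orbit it represents $\tr_e^G$. Applying $\Sigma^\infty$ to the Barratt--Puppe cofiber sequence of pointed $G$-spaces $W\to S^V\xrightarrow{c}S^V/W\xrightarrow{\delta}\Sigma W$ then identifies $F(SG,S^V)\simeq\Fib(\Sigma^\infty c)\simeq\Sigma^\infty W$, so we may take $T=W$.

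For the ``moreover'' part: after the identifications above, the composite $F(\Sigma G_+,S^V)\to F(SG,S^V)$ is the connecting map $\Sigma^{-1}\Sigma^\infty(S^V/W)\to\Sigma^\infty W$ of that same cofiber sequence, i.e.\ $\pm\Sigma^{-1}\Sigma^\infty\delta$. Now $S^V$ is obtained from $W$ by attaching one free $|V|$-cell: there is a based $G$-map $\phi\colon G_+\wedge S^{|V|-1}\to W$, built from the boundary inclusions $\partial B_g\hookrightarrow W$ and rendered basepoint-preserving by sliding along a path in $W$ from the basepoint of $\partial B_e$ to $\infty$ (such a path exists because the underlying space of $W$ is connected, and it is transported over the free orbit $G$), together with a $G$-equivalence $S^V\simeq W\cup_\phi C(G_+\wedge S^{|V|-1})$. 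Under $S^V/W\cong G_+\wedge S^{|V|}=\Sigma(G_+\wedge S^{|V|-1})$ the Puppe connecting map $\delta$ is then $\pm\Sigma\phi$, so the composite is $\pm\Sigma^\infty\phi$, manifestly induced by the map of spaces $\phi$ (or $-\phi$, which is again a map of spaces).

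I expect the genuine obstacle to be exactly the claim used in the last paragraph: that $S^V$ is $G$-homotopy equivalent, \emph{as a pointed $G$-space}, to $W$ with a single based free $|V|$-cell attached, and that the Puppe connecting map is identified with that attaching map up to sign. This is elementary but requires care with basepoints and with keeping the constructions $G$-equivariant. Everything else is formal: Spanier--Whitehead duality, the Pontryagin--Thom description of transfers, and the Barratt--Puppe sequence --- the only essential new ingredient being the use of the embedding $G\hookrightarrow V$ to turn the transfer into a collapse map of spaces.
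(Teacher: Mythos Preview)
Your argument is correct when $|V|\geq 2$, but the claim ``the underlying space of $W$ is connected'' fails in the one remaining case $|V|=1$, where necessarily $G=C_2$ and $V=\sigma$: then $S^\sigma$ minus two open arcs is two arcs, and there is no path in $W$ from $\partial B_e$ to the basepoint $\infty$. The paper also has to treat this case separately, so this is a minor gap rather than a structural one, but you should flag it and dispatch it by hand (here $T\simeq S^0$ and the map in question is the identity $S^0\to S^0$).

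On the comparison with the paper: your $W=S^V\setminus\bigsqcup_g\operatorname{int}(B_g)$ deformation-retracts onto the paper's $T=S^V\setminus G$, so the two identifications of $F(SG,S^V)$ agree. The real difference is in the ``moreover'' clause. You build the map explicitly as the cell-attaching map $\phi$ and identify the Puppe connecting map with $\pm\Sigma\phi$; this is concrete but, as you note yourself, requires care with basepoints and with the equivariance of the homotopy $S^V\simeq W\cup_\phi C(G_+\wedge S^{|V|-1})$. The paper instead passes through the induction--restriction adjunction: a $G$-map out of $G_+\wedge S^{|V|-1}$ is the same data as a nonequivariant map out of $S^{|V|-1}$, and nonequivariantly $T\simeq\bigvee_{|G|-1}S^{|V|-1}$, so one is asking whether a stable map $S^{|V|-1}\to\bigvee_{|G|-1}S^{|V|-1}$ desuspends, which it does for $|V|\geq 2$ by Freudenthal. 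That argument is shorter and sidesteps the basepoint bookkeeping entirely, at the cost of being less explicit about what the map actually is; your version has the virtue of naming the map geometrically.
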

\begin{proof}
After fixing an embedding $G\rightarrow S(V)$, equivariant Spanier--Whitehead duality allows us to identify
\[
F(SG,S^V)\simeq S(S(V)\setminus G)\simeq S^V\setminus SG,
\]
so we may take $T = S^V\setminus G$. Now the composite
\[
\Sigma^\infty(G_+\wedge S^{|V|-1})\simeq F(\Sigma G_+,S^V)\rightarrow F(SG,S^V)\simeq \Sigma^\infty Z
\]
is adjoint to a nonequivariant map
\[
S^{|V|-1}\rightarrow \bigvee_{|G|-1}S^{|V|-1}.
\]
If $|V|\geq 2$, then any such map is induced by a map of spaces. If $|V| = 1$, then necessarily $G = C_2$ and $V = \sigma$, in which case this is the identity $S^0\rightarrow S^0$.
\end{proof}

\subsection{Symmetric Toda brackets}

In \cite[Corollary 3.7]{toda1962composition}, Toda proved that if $r$ is an integer and $y\in \pi_n S$ satisfies $ry = 0$, then
\[
\langle r,y,r\rangle \ni \begin{cases}0&r\not\equiv 2 \pmod{4},\\
\eta y & r \equiv 2 \pmod{4}.
\end{cases}
\]
In verifying condition (3) of \cref{lem:adams}, one would like to have a generalization of this that applies to the equivariant stable stems. Such a generalization is provided by work of Baues and Muro in \cite{bauesmuro2009toda}, as we now describe.

We begin by recalling the construction of the power operation $\Sq_1$. We essentially follow the treatment in \cite[Section 1.6.4]{lurie2018ellipticii}.

\begin{construction}\label{constr:sq1}
Given an $\bbE_\infty$ space $Z$ (not necessarily grouplike), the operation
\[
\pi_0 Z \ni z \mapsto \Sq_1(z) \in \pi_1(Z;z^2)
\]
is defined as follows. Fix $z\in \pi_0 Z$, and consider the following diagram:
\begin{center}\begin{tikzcd}
\ast\ar[r,"z"]\ar[d]&Z\\
\coprod_n B\Sigma_n\ar[ur,dashed]&B\Sigma_2\ar[l]&S^1\ar[l]\ar[ul,"\Sq_1(z)"']
\end{tikzcd}.\end{center}
As $Z$ is $\bbE_\infty$, the map $z\colon \ast\rightarrow Z$ extends canonically through the free $\bbE_\infty$ space $\coprod_n B\Sigma_n$. Now $\Sq_1(z) \in \pi_1(Z;z^2)$ is defined as the image of the unique generator of $\pi_1 B\Sigma_2\cong \bbZ/(2)$ under this map.
\tqed
\end{construction}

\begin{defn}
Given a $\bbE_\infty$ ring $R$, the (nonadditive) operation
\[
\Sq_1\colon \pi_0 R\rightarrow \pi_1 R
\]
is given by applying \cref{constr:sq1} to the multiplicative $\bbE_\infty$ structure on $\Omega^\infty R$, and further using the translation isomorphisms $\pi_1(\Omega^\infty R;r)\cong \pi_1 R$ defined for any $r\in \pi_0 R$.
\tqed
\end{defn}

The relevance of $\Sq_1$ to symmetric Toda brackets is the following.

\begin{prop}\label{lem:sq1}
Let $R$ be an $\bbE_\infty$ ring within $G$-spectra, and let $\rho$ be a virtual $G$-representation. Suppose given $x\in \pi_0 R$ and $y\in \pi_\rho R$ satisfying $xy = 0$. Then
\[
\Sq_1(x)\cdot y \in \langle x,y,x\rangle.
\]
\end{prop}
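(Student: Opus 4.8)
The plan is to realize the Toda bracket $\langle x,y,x\rangle$ via an explicit nullhomotopy extracted from the $\bbE_\infty$-structure, and to identify the resulting element with $\Sq_1(x)\cdot y$. The key observation is that $\langle x,y,x\rangle$ is computed from two chosen nullhomotopies of $xy$ and $yx$; since $R$ is $\bbE_\infty$ (in particular $\bbE_2$, so the two multiplications $x\cdot y$ and $y\cdot x$ are homotopic), the \emph{difference} between the ``left'' and ``right'' nullhomotopies of $xy$ is governed precisely by the braiding, which loops around to produce the class $\Sq_1(x)$. Concretely, I would work with the representing maps $x\colon S_G\to R$ and $y\colon S^\rho_G\to R$, form the two composites $S^\rho_G\xrightarrow{x\wedge y} R\wedge R\xrightarrow{\mu} R$ using the multiplication on either side, and note that both are nullhomotopic by the hypothesis $xy=0$ together with commutativity.

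The main step is the following formal manipulation. The Toda bracket $\langle x,y,x\rangle$ lives in $\pi_\rho R / (x\cdot \pi_\rho R + \pi_1 R\cdot x)$... wait, more precisely in $\pi_{\rho+1}R$ modulo indeterminacy; its defining diagram is $S^{\rho}_G\xrightarrow{x} \Sigma^{-1}(\text{cofiber stuff})$, but the cleanest route is: choose a nullhomotopy $H$ of $x\cdot y\colon S^\rho_G\to R$, which extends $x\cdot y$ over the cone, and similarly a nullhomotopy of $y\cdot x$; the bracket is the obstruction to these agreeing after one suspension. Because $R$ is $\bbE_\infty$, I can take the nullhomotopy of $y\cdot x$ to be obtained from that of $x\cdot y$ by transport along the commutativity isomorphism $\tau\colon x\cdot y\simeq y\cdot x$. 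Then $\langle x,y,x\rangle$ is represented by the loop in $\mathrm{Map}(S^\rho_G,R)$ obtained by going from $H$ along $\tau$ and back along $H$ again — and this loop, being built entirely out of the braiding on the two copies of $x$ while $y$ is a spectator, is exactly the action on $y$ of the class $\Sq_1(x)\in\pi_1 R$ from \cref{constr:sq1}. Here one uses that $\Sq_1(x)$ is by construction the image of the generator of $\pi_1 B\Sigma_2$ under the map classifying $x^2$, and that this generator is precisely the half-twist exchanging the two factors.

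To make this rigorous I would phrase it in terms of the multiplication map. Let $m_2\colon (\coprod_n B\Sigma_n)_+\wedge R^{\wedge ?}\to R$ — rather, use that $x\colon S_G\to R$ extends to $x^{\text{sym}}$ on the free $\bbE_\infty$-algebra, so that the square $x^2$ is classified by a map $B\Sigma_2{}_+\to R$ (after smashing appropriately), and $\Sq_1(x)$ is the restriction along $S^1\to B\Sigma_2$. Multiplying by $y$ and using $xy=0$, one gets a map $B\Sigma_2{}_+\wedge S^\rho_G\to R$ whose restriction to $\{*\}_+\wedge S^\rho_G$ is null (that is the nullhomotopy), and whose restriction to $S^1_+\wedge S^\rho_G$, composed with the collapse $S^1_+\to S^1$, represents $\Sq_1(x)\cdot y$. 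On the other hand, the two points of $B\Sigma_2$ joined by the nontrivial loop correspond to the two orderings in the product $x\cdot y$ vs.\ the ``other'' $x\cdot y$ appearing in the bracket, so this same map exhibits an element of $\langle x,y,x\rangle$. I would cite \cite{bauesmuro2009toda} for the comparison between this $\Sq_1$-style description and the classical symmetric Toda bracket, essentially reducing the general equivariant statement to their result by naturality.

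The hard part, and the place where care is genuinely needed, is matching conventions: verifying that the loop in mapping space produced by the $\bbE_\infty$-braiding is the \emph{same} element (not merely up to sign or up to indeterminacy) as the one appearing in the standard construction of $\langle x,y,x\rangle$, and checking that working $G$-equivariantly with a representation-graded $y$ introduces no subtlety — it should not, since $y$ is just a spectator smash factor and $S^\rho_G$ is dualizable, but one must confirm that the $\bbE_\infty$-operad acts on $R$ itself (not on some Borel or genuine variant) compatibly with the grading. Once the nonequivariant identity is set up operadically rather than with explicit cell structures, the equivariant case is a formal consequence.
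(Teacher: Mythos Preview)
Your outline has the right operadic intuition --- the symmetric bracket $\langle x,y,x\rangle$ really is controlled by the braiding loop $S^1\to B\Sigma_2$, and this is morally how Baues and Muro prove their nonequivariant result.  But as you yourself flag, making this precise (matching the loop you build to the standard Toda construction, tracking signs, and checking nothing goes wrong with the $RO(G)$-grading) amounts to reproving \cite[Theorem~1.9]{bauesmuro2009toda} in greater generality, and that is exactly the nontrivial content of their paper.

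The paper takes a different and much shorter route: it \emph{reduces} to the Baues--Muro theorem rather than rerunning its proof.  The trick is to form the square-zero extension $R\oplus\Sigma^{-\rho}R$, which is again an $\bbE_\infty$ ring, take $G$-fixed points, and pass to the connective cover.  In the resulting ordinary connective $\bbE_\infty$ ring $\tilde R = \bigl((R\oplus\Sigma^{-\rho}R)^G\bigr)_{\geq 0}$, both $x$ and $y$ live in $\pi_0$ (the latter via the ideal summand), the relation $xy=0$ persists, and \cite[Theorem~1.9]{bauesmuro2009toda} applies verbatim; the conclusion in $\pi_1\tilde R$ unwinds to the desired statement in $\pi_{\rho+1}^G R$.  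Your route, if carried out, would give a self-contained argument independent of Baues--Muro, at the cost of redoing their work; the paper's route is a two-line reduction that imports their result as a black box.
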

\begin{proof}
When $R$ is a nonequivariant connective ring spectrum, this is exactly \cite[Theorem 1.9]{bauesmuro2009toda}. The general case follows from this by considering $\left((R \oplus \Sigma^{-\rho} R)^G\right)_{\geq 0}$, where $R\oplus \Omega^\rho R$ is given an $\bbE_\infty$ ring structure as a square-zero extension of $R$.
\end{proof}

As far as our proof of \cref{thm:selfmap} is concerned, we only need the following example.

\begin{lemma}\label{lem:sq1int}
For an integer $n$, we have
\[
\Sq_1(n) = \begin{cases}0&n\equiv 0,1\pmod{4},\\ \eta&n\equiv 2,3\pmod{4}.\end{cases}
\]
\end{lemma}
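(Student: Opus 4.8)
The plan is to pin down $\Sq_1$ on all of $\pi_0 S = \bbZ$ from just two inputs: that $\Sq_1$ is a quadratic operation, and that it vanishes on the unit. Observe first that $\Sq_1(n)\in\pi_1 S=\bbZ/2$, generated by $\eta$, so it suffices to compute $\Sq_1(n)$ mod $2$ as a function of $n$, and by naturality everything takes place in $R=S$.

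The structural input I would use is that $\Sq_1$ is \emph{quadratic}: $\Sq_1(0)=0$, $\Sq_1(-x)=\Sq_1(x)$, and the cross-effect $b(x,y):=\Sq_1(x+y)-\Sq_1(x)-\Sq_1(y)$ is bilinear — this reflects the fact that the deviation of the second power operation from additivity is controlled by the $\Sigma_2$-transfer, and in this form it can be found in or extracted from \cite[\S1.6.4]{lurie2018ellipticii} (alternatively one may cite \cite{bauesmuro2009toda} for the formula $\Sq_1(x+y)=\Sq_1(x)+\Sq_1(y)+\eta xy$ directly and skip to the last paragraph). A bilinear map $\bbZ\times\bbZ\to\bbZ/2$ is determined by $b(1,1)=\Sq_1(2)-2\Sq_1(1)=\Sq_1(2)$, so the one thing that genuinely needs to be computed is $\Sq_1(2)=\eta$. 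For this I would apply \cref{lem:sq1} with $x=2$ and $y=\eta$: since $2\eta=0$ one gets $\Sq_1(2)\cdot\eta\in\langle 2,\eta,2\rangle$, while Toda's computation puts $\eta\cdot\eta=\eta^2$ in the same bracket, whose indeterminacy $2\pi_2 S+\pi_2 S\cdot 2$ vanishes because $\pi_2 S=\bbZ/2$; hence $\Sq_1(2)\cdot\eta=\eta^2\neq 0$, forcing $\Sq_1(2)=\eta$ and thus $b(x,y)=\eta xy$.

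The second input, $\Sq_1(1)=0$, I would get from \cref{constr:sq1}: since $1\in\pi_0 S$ is the unit, the classifying $\bbE_\infty$-map $\coprod_n B\Sigma_n\to\Omega^\infty S$ appearing there is the one sending the generating point to the unit, and by the universal property this map factors through the unit $\bbE_\infty$-space, whose underlying space is a point; it is therefore constant on the $B\Sigma_2$-summand, so the generator of $\pi_1 B\Sigma_2$ maps to $0$. (Equivalently: the total squaring operation carries the unit of $S$ to the unit of $S^{B\Sigma_2}$, which is a constant map.)

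Putting these together, $\Sq_1$ is the quadratic function on $\bbZ$ with $\Sq_1(0)=\Sq_1(1)=0$ and cross-effect $\eta xy$, so $\Sq_1(n+1)=\Sq_1(n)+\Sq_1(1)+b(n,1)=\Sq_1(n)+\eta n$ for every $n\in\bbZ$. Iterating this recursion in both directions from $\Sq_1(0)=0$ yields $\Sq_1(n)=\eta\cdot\binom{n}{2}$, and $\binom{n}{2}\bmod 2$ is periodic of period $4$ with values $0,0,1,1$ for $n\equiv 0,1,2,3$, which is the asserted formula. The one non-formal point is the identification of the cross-effect — equivalently the computation $\Sq_1(2)=\eta$ via the symmetric Toda bracket $\langle 2,\eta,2\rangle$; everything else is bookkeeping with quadratic functions on $\bbZ$.
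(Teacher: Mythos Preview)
Your proof is correct and its core is the same as the paper's: induct from $\Sq_1(0)=\Sq_1(1)=0$ using the identity $\Sq_1(a+b)=\Sq_1(a)+\Sq_1(b)+\eta ab$. The paper takes exactly the shortcut you flag in your parenthetical, citing this identity as \cite[(T11)]{bauesmuro2009toda} rather than rederiving the cross-effect via the Toda bracket $\langle 2,\eta,2\rangle$ as in your main line.
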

\begin{proof}
Induct using $\Sq_1(0) = 0 = \Sq_1(1)$ and the identity \cite[(T11)]{bauesmuro2009toda}
\[
\Sq_1(a+b) = \Sq_1(a)+\Sq_1(b)+\eta \cdot a \cdot b.\qedhere
\]
\end{proof}

We round out this discussion by explaining how to compute
\[
\Sq_1\colon \pi_0 S_G\rightarrow \pi_1 S_G
\]
for a finite group $G$. This is not needed for our proof of \cref{thm:selfmap}, but we expect it to be useful for future equivariant computations.

We begin by recalling the structure of $\pi_1 S_G$. As $G$ is a finite group, $(\Omega^\infty S_G)^G$ is equivalent to the group completion of the groupoid $\Fin_G^\simeq$ of finite $G$-sets. The groupoid of finite $G$-sets is equivalent to the coproduct, over conjugacy classes of subgroups $H\subset G$, of the free coproduct-completions of the $1$-object categories corresponding to the Weyl groups $W_GH$. Associated to this is the tom Dieck Splitting
\[
(S_G)^G\simeq \bigoplus_{(H)}\Sigma^\infty_+ BW_GH,
\]
and we can use this splitting to identify
\[
\pi_1 S_G \cong \bigoplus_{(H)}\left(\{\pm 1\} \times (W_GH)^\ab\right).
\]

The Cartesian product of $G$-sets makes $\Fin_G^\simeq$ into a symmetric monoidal groupoid, i.e.\ a $1$-truncated $\bbE_\infty$ space. If $X$ is a finite $G$-set, then we may view $X$ as a path component of this space, and \cref{constr:sq1} specializes to
\[
\Sq_1(X) = \tau \in \Aut_{\Fin_G}(X\times X),\qquad \tau(a,b) = (b,a).
\]
Group completion induces a map of $\bbE_\infty$ spaces
\[
\phi\colon \Fin_G^\simeq\rightarrow (\Omega^\infty S_G)^G,
\]
and, by naturality, to describe $\Sq_1(X) \in \pi_1 S_G$ it therefore suffices to understand the effect of $\phi$ on $\pi_1$. If $T$ is a finite $g$-set, then this is a map
\[
\phi_T\colon \Aut_{\Fin_G}(T)\rightarrow \pi_1 S_G
\]
which can be computed as follows. First split $T$ into a coproduct of orbits, lumping orbits of the same type together
\[
T \cong \coprod_{(H)}(n_H\cdot G/H),
\]
so that
\[
\Aut_{\Fin_G}(T)\cong \prod_{(H)}\Sigma_{n_H}\wr W_G(H).
\]
Now $\phi_T$ is just the abelianization map
\begin{gather*}
\prod_{(H)}\Sigma_{n_H}\wr W_GH \rightarrow \prod_{(H)}\left(\{\pm 1\}\times (W_GH)^\ab\right),\\ (\sigma_H,(x_{H,1},\ldots,x_{H,n_H}))_{(H)} \mapsto (\sgn(\sigma_H),x_{H,1}\cdots x_{H,n_H})_{(H)}.
\end{gather*}

\begin{ex}\label{ex:sq1g}
Consider $G = G/e$ as a $G$-set over itself. Then there is an isomorphism
\[
\coprod_{x\in G}G\cdot \iota_x\rightsim G\times G,\qquad \iota_x \mapsto (e,x),
\]
and so $\Sq_1([G/e])$ lives in the summand $\{\pm 1 \}\times G^\ab \cong \pi_1 \Sigma^\infty_+ BG\subset \pi_1 S_G$. Under this isomorphism, the swapping map $\tau\colon G\times G\rightarrow G\times G$ can be identified as
\[
\coprod_{x\in G}G\cdot \iota_x\rightarrow\coprod_{x\in G}G\cdot\iota_x,,\qquad \iota_x\mapsto x\iota_{x^{-1}}. 
\]
It follows that
\[
\Sq_1([G/e]) = (\sgn(\sigma),\prod_{g\in G}g),
\]
where $\sigma\colon G\rightarrow G$ is the permutation given by inversion.
\tqed
\end{ex}

\begin{ex}
Specializing \cref{ex:sq1g} to the case where $G = C_n$ is a cyclic group of order $n$, say with generator $g$, and writing $\pi_1 \Sigma^\infty_+ BC_n \cong \bbZ/(2)\{\eta\cdot [C_n/e]\}\times C_n$, we find
\[
\Sq_1([C_n/e]) = \begin{cases}
(\eta\cdot [C_n/e],g^{n/2})&n\equiv 0 \pmod{4},\\
(0,e)&n\equiv 1\pmod{4},\\
(0,g^{n/2})&n\equiv 2 \pmod{4},\\
(\eta\cdot [C_n/e],e)&n\equiv 3\pmod{4}.
\end{cases}
\]
When $G = C_2$, this is detected by $\tau h_1$ in the equivariant Adams spectral sequence \cite{guillouisaksen2024c2}.
\tqed
\end{ex}

\subsection{\texorpdfstring{$v_1$}{v\_1}-self maps and the Bott class}

This subsection addresses a technical point (which is not strictly needed for \cref{thm:selfmap}): the $KU_G$-equivalences produced by \cref{lem:adams} are \textit{not} guaranteed to induce multiplication by the Bott class in $KU_G$-theory. We resolve this issue in our particular case, at least when $G = C_{p^n}$ is a cyclic $p$-group. It seems plausible something more general could be said, but we shall not pursue this.

So fix a prime $p$, consider a cyclic $p$-group $C_{p^n}$ with generator $g$, and let $\Gamma = \Aut(C_{p^n})$. The action of $\Gamma$ on $C_{p^n}$ has orbits
\[
O_i = \{g^k : \gcd(k,p^n) = p^i\}
\]
for $1\leq i \leq n$, and accordingly the action of $\Gamma$ on $RU(C_{p^n})\cong \bbZ[C_{p^n}]$ by Adams operations splits as a sum of permutation modules:
\[
RU(C_{p^n}) = \bigoplus_{1\leq i \leq n}M_i,\qquad M_i \cong \bbZ\{O_i\}.
\]
The representations $\gamma_i = \sum_{x\in O_i}x$ form a basis for the fixed points $RU(C_{p^n})^\Gamma$. Moreover, $\bbC[C_{p^n}/C_{p^i}] = \gamma_n+\cdots+\gamma_{i}$, realizing an isomorphism $A(C_{p^n})\cong RU(C_{p^n})^\Gamma$. We will use this to identify the Burnside ring $A(C_{p^n})$ as a subring of $RU(C_{p^n})$.

For an element $r$ of a ring $R$, write $\Ann(r;R) = \ker(r\colon R\rightarrow R)$.

\begin{lemma}\label{lem:fpiso}
Fix a virtual $C_{p^n}$-set $X\in A(C_{p^n})$. Then
\[
\Ann(X;A(C_{p^n}))\cong \Ann(X;RU(C_{p^n}))^\Gamma\quad\text{and}\quad A(C_{p^n})/X \cong (RU(C_{p^n})/X)^\Gamma.
\]
\end{lemma}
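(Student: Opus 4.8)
The plan is to deduce both isomorphisms from the fact that $A(C_{p^n}) = RU(C_{p^n})^\Gamma$ sits inside $RU(C_{p^n}) = \bigoplus_i M_i$ as the fixed points of a finite group $\Gamma = \Aut(C_{p^n})$ acting by permuting the $\bbZ$-basis within each summand $M_i$, and to exploit that multiplication by the $\Gamma$-fixed element $X$ is a $\Gamma$-equivariant map. The first isomorphism is then the easy half: since $X \in A(C_{p^n}) = RU(C_{p^n})^\Gamma$, multiplication by $X$ commutes with the $\Gamma$-action on $RU(C_{p^n})$, so $\Ann(X; RU(C_{p^n}))$ is a $\Gamma$-submodule, and intersecting with $RU(C_{p^n})^\Gamma$ gives exactly $\Ann(X; A(C_{p^n})) = \Ann(X; RU(C_{p^n}))^\Gamma$ tautologically. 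So the content is all in the second isomorphism.

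For $A(C_{p^n})/X \cong (RU(C_{p^n})/X)^\Gamma$, first I would observe that taking $\Gamma$-fixed points is left exact, so applying it to the short exact sequence $0 \to \Ann(X;RU) \to RU \xrightarrow{X} RU \to RU/X \to 0$ (broken into two short exact sequences) yields an exact sequence $0 \to \Ann(X;RU)^\Gamma \to RU^\Gamma \xrightarrow{X} RU^\Gamma \to (RU/X)^\Gamma \to H^1(\Gamma; \im(X))$, where $\im(X) = X \cdot RU \subseteq RU$. Since $RU^\Gamma = A(C_{p^n})$ and $RU^\Gamma \xrightarrow{X} RU^\Gamma$ has cokernel $A(C_{p^n})/X$, the natural map $A(C_{p^n})/X \to (RU/X)^\Gamma$ is injective, and surjectivity is equivalent to the vanishing of the connecting map into $H^1(\Gamma; X\cdot RU)$. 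It therefore suffices to show $H^1(\Gamma; X\cdot RU) = 0$, or more robustly that $H^1(\Gamma; -)$ vanishes on $X \cdot RU$, or at least that the relevant obstruction class dies.

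The key structural input is that each permutation module $M_i$ is a direct sum of modules induced from subgroups of $\Gamma$ (one summand $\bbZ[\Gamma/\Gamma_{x}]$ per $\Gamma$-orbit on the basis — here each $M_i$ is a single orbit, so $M_i \cong \bbZ[\Gamma/\Gamma_i]$ where $\Gamma_i$ is the stabilizer of a point of $O_i$), hence $H^1(\Gamma; M_i) \cong H^1(\Gamma_i; \bbZ) = \Hom(\Gamma_i, \bbZ) = 0$ since $\Gamma_i$ is finite. So $H^1(\Gamma; RU) = 0$. This does not immediately give $H^1(\Gamma; X\cdot RU) = 0$ because $X \cdot RU$ need not be a summand, but I would argue instead with the submodule $X \cdot RU$ fitting in $0 \to X\cdot RU \to RU \to RU/X \to 0$: the long exact sequence gives $H^0(\Gamma; RU) \to H^0(\Gamma; RU/X) \to H^1(\Gamma; X \cdot RU) \to H^1(\Gamma; RU) = 0$, so $H^1(\Gamma; X\cdot RU) \cong \coker\big(A(C_{p^n}) \to (RU/X)^\Gamma\big)$ — which is circular. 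The honest route is therefore to show directly that $H^1(\Gamma; X \cdot RU) = 0$. For this I would use that $RU$ is a permutation $\Gamma$-module and $X \cdot RU$ is the image of the $\Gamma$-map $X\colon RU \to RU$; writing $RU \cong \bigoplus_i \bbZ[\Gamma/\Gamma_i]$, the point is that $X \cdot RU$ is again, after extending scalars to $\bbZ[1/|\Gamma|]$, a summand — but we need it integrally.

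Alternatively, and I think this is the cleanest plan: I would prove the cokernel statement by a direct counting/structure argument rather than group cohomology. Since $\Gamma$ permutes a $\bbZ$-basis of $RU$ preserving each $M_i = \bbZ\{O_i\}$ transitively, the averaging/transfer element shows $RU = A \oplus K$ as abelian groups where $K$ is spanned by differences $x - x'$ of basis elements in a common orbit, and this decomposition is preserved by multiplication by any $X \in A$ only up to the mixing caused by the ring structure — so this too needs care. Given these complications, the main obstacle I expect is exactly this: establishing that the natural surjection-candidate $A(C_{p^n})/X \to (RU(C_{p^n})/X)^\Gamma$ is surjective, equivalently that $H^1(\Gamma; X\cdot RU(C_{p^n})) = 0$. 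I would attack it by showing $X\cdot RU$ is a \emph{permutation} $\Gamma$-module (or a direct sum of induced modules): concretely, decompose $A(C_{p^n})$ and use that $X\cdot RU \cong RU / \Ann(X;RU)$, identify $\Ann(X;RU)$ as a sub-permutation-module spanned by a sub-collection of basis vectors and their $\Gamma$-translates (using that $RU(C_{p^n}) = \bbZ[C_{p^n}]$ is a product of fields after localizing appropriately / that $X$ acts on each character-coordinate by a scalar), conclude $RU/\Ann(X;RU)$ is again a permutation module, and invoke $H^1(\Gamma; \text{permutation module}) = 0$ from the Shapiro-lemma computation above. Once surjectivity is in hand, both displayed isomorphisms follow, the first for free and the second from the left-exact-sequence argument.
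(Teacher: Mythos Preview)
Your cohomological framework is reasonable, but the key step has a genuine gap. You propose to show $H^1(\Gamma; X\cdot RU) = 0$ by arguing that $\Ann(X;RU)$ is spanned by a $\Gamma$-stable subset of the character basis $\{L^k\}$, so that $RU/\Ann(X;RU)\cong X\cdot RU$ is again a permutation module. This premise is false already for $G = C_p$ and $X = [C_p] = 1 + L + \cdots + L^{p-1}$: here $\Ann(X;RU)$ is the augmentation ideal of $\bbZ[L]/(L^p-1)$, which contains no monomial $L^k$ whatsoever. Your parenthetical fallback (``$RU(C_{p^n})$ is a product of fields after localizing appropriately'') only kicks in after inverting $p$, where the cyclotomic factors become coprime; but the statement is trivial away from $p$, and at $p$ the ring does not split, so this gives no traction where it matters. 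There is also a second, smaller gap: your five-term sequence implicitly uses $(X\cdot RU)^\Gamma = X\cdot A$ to splice the two short exact sequences, and this too requires justification (it amounts to a vanishing statement for $H^1(\Gamma;\Ann(X;RU))$).

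The paper's proof bypasses group cohomology entirely with a direct change of basis. Working $p$-locally, one writes $X = p^k c\bigl(\gamma_t + \sum_{i\neq t} n_i \gamma_i\bigr)$ with $c\in\bbZ_{(p)}^\times$ and $n_i\in\bbZ_{(p)}$, so that after a unimodular change of the $\gamma$-basis, $X$ is $p^k$ times a single basis vector. One then exhibits an explicit short exact sequence of $\Gamma$-modules
\[
0 \longrightarrow \bbZ/(p^k)\oplus\bigoplus_{i\neq t} M_i \longrightarrow RU(C_{p^n})/X \longrightarrow M_t/\bbZ\{\gamma_t\} \longrightarrow 0,
\]
and observes that $(M_t/\bbZ\{\gamma_t\})^\Gamma = 0$ because $M_t$ is a transitive permutation module with $M_t^\Gamma = \bbZ\{\gamma_t\}$. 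Taking $\Gamma$-fixed points of the left-hand term recovers $A(C_{p^n})/X$ on the nose. This is both shorter and avoids the question of whether $X\cdot RU$ is a permutation module.
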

\begin{proof}
The first isomorphism is a formal consequence of $A(C_{p^n})\cong RU(C_{p^n})^\Gamma$.

For the second, we can write
\[
X = p^kc\left(\gamma_t+\sum_{1\leq i\neq t \leq n}n_i \gamma_i\right)
\]
for some $t$, where $n_i \in \bbZ_{(p)}$ and $c\in \bbZ_{(p)}^\times$. It follows that
\[
A(C_{p^n})/X = \bbZ\{\gamma_i : 1\leq i \neq t \leq n\} \oplus \bbZ/(p^k)\{X\}
\]
and that $RU(C_{p^n})/X$ fits into a short exact sequence
\[
0\rightarrow \bbZ/(p^k)\{X\}\oplus \bigoplus_{1\leq i \neq t \leq n}M_i\rightarrow RU(C_{p^n})/X \rightarrow M_t/\bbZ\{\gamma_t\}\rightarrow 0
\]
of $\Gamma$-modules. As $(M_t/\bbZ\{\gamma_t\})^\Gamma = 0$, it follows that 
\[
(RU(C_{p^n})/X)^\Gamma\cong \left(\bbZ/(p^k)\{X\}\oplus \bigoplus_{1\leq i \neq t \leq n}M_i\right)^\Gamma\cong A(C_{p^n})/X
\]
as claimed.
\end{proof}

\begin{prop}\label{prop:surjfp}
Fix a virtual $C_{p^n}$-set $X\in A(C_{p^n})$. Then 
\[
\pi_0 \End(C(X))\rightarrow \pi_0 KU_{C_{p^n}}\otimes \End(C(X))
\]
surjects onto the subring fixed by all Adams operations $\psi^\ell$ with $p\nmid \ell$.
\end{prop}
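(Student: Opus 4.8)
The plan is to compute both sides of the map from the defining cofiber sequence $S_G\xrightarrow{X}S_G\to C(X)\to\Sigma S_G$ and to compare them via the $KU_{C_{p^n}}$-Hurewicz map, using \cref{lem:fpiso} to identify the groups that appear. Abbreviate $A=A(C_{p^n})$, $R=RU(C_{p^n})$, $\Gamma=\Aut(C_{p^n})$, and note that the $\psi^\ell$-action on $R$ --- hence on anything assembled $KU_{C_{p^n}}$-linearly --- depends only on $\ell\bmod p^n$, so the subring fixed by all $\psi^\ell$ with $p\nmid\ell$ is precisely the $\Gamma$-invariant subring.

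First I would record the two ``universal coefficient'' presentations. Applying $F(-,C(X))$ to the cofiber sequence exhibits $\End(C(X))$ as the fiber of multiplication by $X$ on $C(X)$, with the map $\End(C(X))\to C(X)$ being restriction along $j\colon S_G\to C(X)$; since $X$ acts by zero on $\pi_0 C(X)=A/X$, the resulting long exact sequence gives a surjection $j^\ast\colon\pi_0\End(C(X))\twoheadrightarrow A/X$ with kernel the image of a connecting map $\delta\colon\pi_1C(X)\to\pi_0\End(C(X))$. Smashing the same cofiber sequence with $KU_{C_{p^n}}$ and using that $\pi_n KU_{C_{p^n}}=0$ for $n$ odd gives $\pi_0(KU_{C_{p^n}}\wedge C(X))=R/X$ and $\pi_1(KU_{C_{p^n}}\wedge C(X))=\Ann(X;R)$ with $X$ acting by zero on both, so the analogous long exact sequence collapses to a short exact sequence
\[
0\longrightarrow\Ann(X;R)\longrightarrow\pi_0KU_{C_{p^n}}\otimes\End(C(X))\longrightarrow R/X\longrightarrow 0,
\]
and this is $\Gamma$-equivariant for the Adams action, compatibility with the structure maps using only that $\psi^\ell(X)=X$. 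Naturality along the unit $S_G\to KU_{C_{p^n}}$ gives a ladder between the two presentations. By \cref{lem:fpiso}, the map $A/X\to R/X$ is injective with image $(R/X)^\Gamma$, while the Hurewicz map $\pi_1C(X)\to\pi_1(KU_{C_{p^n}}\wedge C(X))=\Ann(X;R)$ factors as $\pi_1C(X)\twoheadrightarrow\Ann(X;A)\hookrightarrow\Ann(X;R)$ and so has image $\Ann(X;R)^\Gamma$.

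Granting this, the proposition is a diagram chase. The image of $\pi_0\End(C(X))$ lands in the $\Gamma$-invariants since the unit $S_G\to KU_{C_{p^n}}$ is fixed by every $\psi^\ell$. Conversely, given an invariant $\phi$: its image in $R/X$ is invariant, hence by \cref{lem:fpiso} lifts to $A/X$, which in turn lifts to some $\psi\in\pi_0\End(C(X))$ because $j^\ast$ is onto; then $\phi$ and the Hurewicz image of $\psi$ agree in $R/X$, so their difference lies in $\Ann(X;R)$ and is $\Gamma$-invariant, whence by \cref{lem:fpiso} it equals the Hurewicz image of $\delta(\theta)$ for some $\theta\in\pi_1C(X)$, and then $\psi+\delta(\theta)$ maps to $\phi$. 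The point needing the most care is matching the ``sub'' terms of the two presentations: on the sphere side $\pi_1C(X)$ is itself an extension of $\Ann(X;A)$ by $\pi_1S_G/X$, on which $X$ need not act by zero, whereas on the $KU_{C_{p^n}}$ side the relevant group is simply $\Ann(X;R)$; the reconciliation is that $\pi_1KU_{C_{p^n}}=0$, so the $\pi_1S_G/X$ summand maps to zero and only the map $\Ann(X;A)\hookrightarrow\Ann(X;R)$ controlled by \cref{lem:fpiso} survives. One also checks that the $\Gamma$-action respects the displayed filtration, which again comes down to $\psi^\ell$ being a ring map fixing $X$.
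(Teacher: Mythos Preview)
Your argument is correct and follows essentially the same route as the paper: both use the defining cofiber sequences to present $\pi_0\End(C(X))$ and $\pi_0 KU_{C_{p^n}}\otimes\End(C(X))$ in terms of $A/X$, $\Ann(X;A)$, $R/X$, $\Ann(X;R)$, and then invoke \cref{lem:fpiso}. The paper is terser, simply noting that $\pi_1 S_{C_{p^n}}$ is finite and $\pi_1 KU_{C_{p^n}}=0$ to arrive at the identifications, whereas you carry out the diagram chase and the bookkeeping with $\pi_1 S_G/X$ explicitly; but the content is the same.
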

\begin{proof}
Consider the defining cofiber sequences
\begin{center}\begin{tikzcd}[row sep=0mm]
S_{C_{p^n}}\ar[r,"X"]&S_{C_{p^n}}\ar[r]&C(X)\\
\End(C(X))\ar[r]&C(X)\ar[r,"X"]&C(X)
\end{tikzcd}.\end{center}
As $\pi_1 S_{C_{p^n}}$ is finite and $\pi_1 KU_{C_{p^n}} = 0$, these show
\begin{align*}
\pi_0 \End(C(X))/\text{torsion} &\cong A(C_{p^n})/X\oplus \Ann(X;A(C_{p^n})),\\
\pi_0 KU_{C_{p^n}}\otimes \End(C(X))&\cong RU(C_{p^n})/X\oplus \Ann(X;RU(C_{p^n})),
\end{align*}
and so the proposition follows from \cref{lem:fpiso}.
\end{proof}

\begin{cor}\label{cor:bottclass}
The $KU_{C_{p^n}}$-equivalences guaranteed by \cref{thm:mainthm2} may be chosen to induce multiplication by the Bott class $\beta_V\in \pi_V KU_{C_{p^n}}$ in equivariant $K$-theory.
\end{cor}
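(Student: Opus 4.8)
The plan is to start from any $KU_{C_{p^n}}$-equivalence $v\colon\Sigma^V C(X)\to C(X)$ furnished by \cref{thm:mainthm2} and correct it by precomposing with a self-map of $C(X)$, using \cref{prop:surjfp} to control the available corrections. Write $B=\pi_0\bigl(KU_{C_{p^n}}\otimes\End(C(X))\bigr)=\pi_0\End_{KU_{C_{p^n}}}(KU_{C_{p^n}}\otimes C(X))$ with its Adams operations $\psi^\ell$ ($p\nmid\ell$); recall from the proof of \cref{prop:surjfp} that $B\cong RU(C_{p^n})/X\oplus\Ann(X;RU(C_{p^n}))$ as an $RU(C_{p^n})$-algebra, with $RU(C_{p^n})$ acting through the first summand. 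Multiplication by the Bott class is a $KU_{C_{p^n}}$-linear equivalence $\mu\colon\Sigma^V(KU_{C_{p^n}}\otimes C(X))\rightsim KU_{C_{p^n}}\otimes C(X)$, and I would form the unit $\tilde u:=(KU_{C_{p^n}}\otimes v)\circ\mu^{-1}\in B^\times$, which is exactly the obstruction to $v$ inducing multiplication by $\beta_V$. Precomposing $v$ with $\Sigma^V w$ for $w\in\pi_0\End(C(X))$ replaces $\tilde u$ by $\tilde u\cdot(KU_{C_{p^n}}\otimes w)$, so by \cref{prop:surjfp} it suffices to show that $\tilde u$ — equivalently $\tilde u^{-1}$ — lies in the $\psi$-fixed subring $B^\psi$.

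The main point, and essentially the only work, is that $\tilde u$ is $\psi$-fixed. The factor $KU_{C_{p^n}}\otimes v$ is $\psi$-fixed by naturality alone, $v$ being a map of $C_{p^n}$-spectra and $\psi^\ell$ acting only on the $KU_{C_{p^n}}$-factor. For $\mu$, conjugation by $\psi^\ell$ replaces $\mu$ by the multiplication map associated to $\psi^\ell(\beta_V)$, and $\mu$ depends on $\beta_V$ only through its image in $\pi_V(KU_{C_{p^n}}\otimes C(X))=\pi_V KU_{C_{p^n}}/X$ (since multiplication by $X$ is null on $KU_{C_{p^n}}\otimes C(X)$). By \cref{lem:adamsop}, $\psi^\ell(\beta_V)=\bigl(1+d\,p^{k+1-n}\tr_e^{C_{p^n}}(1)\bigr)\beta_V$ with $p\nmid d$, so I am reduced to checking $p^{k+1-n}\tr_e^{C_{p^n}}(1)\in X\cdot RU(C_{p^n})_{(p)}$. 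This is where the numerical hypothesis enters: by the projection formula $\tr_e^{C_{p^n}}(1)\cdot X=|X|\cdot\tr_e^{C_{p^n}}(1)=p^t c\cdot\tr_e^{C_{p^n}}(1)$ in $RU(C_{p^n})$ (as $\tr_e^{C_{p^n}}(1)=\Ind_e^{C_{p^n}}\bbC$), and since $p\nmid c$ and $k+1-n-t\geq 0$ we get $p^{k+1-n}\tr_e^{C_{p^n}}(1)=p^{\,k+1-n-t}c^{-1}\bigl(\tr_e^{C_{p^n}}(1)\cdot X\bigr)\in X\cdot RU(C_{p^n})_{(p)}$. Hence $\psi^\ell$ fixes $\mu$, and therefore fixes $\tilde u$.

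Granting this, $\tilde u^{-1}\in B^\psi$ as well, and \cref{prop:surjfp} produces $w\in\pi_0\End(C(X))$ with $KU_{C_{p^n}}\otimes w=\tilde u^{-1}$; then $v':=v\circ\Sigma^V w$ satisfies $(KU_{C_{p^n}}\otimes v')\circ\mu^{-1}=\tilde u\,\tilde u^{-1}=1$, so $KU_{C_{p^n}}\otimes v'$ is multiplication by $\beta_V$ (in particular still an equivalence), as desired. The step I expect to require the most care is the bookkeeping: verifying that conjugation by $\mu$ twists a $KU_{C_{p^n}}$-linear endomorphism by exactly the class $\psi^\ell(\beta_V)/\beta_V$, and tracking the suspensions $\Sigma^V$ through the identification $B\cong\pi_0\End_{KU_{C_{p^n}}}\bigl(\Sigma^V(KU_{C_{p^n}}\otimes C(X))\bigr)$. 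These are routine but should be done honestly; everything else is formal once the containment $p^{k+1-n}\tr_e^{C_{p^n}}(1)\in X\cdot RU(C_{p^n})_{(p)}$ is in hand.
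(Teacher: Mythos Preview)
Your proof is correct and essentially identical to the paper's. Both factor $KU_{C_{p^n}}\otimes v$ as $\beta_V$ times a unit $u\in B$ (your $\tilde u$), observe that $u$ is fixed by Adams operations because both $h(v)$ and $\beta_V$ are---you spell out the latter via \cref{lem:adamsop} and the containment $p^{k+1-n}\tr_e^{C_{p^n}}(1)\in X\cdot RU(C_{p^n})_{(p)}$, which the paper simply attributes to ``our hypotheses on $V$ and $X$''---then lift $u^{-1}$ using \cref{prop:surjfp} and correct $v$.
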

\begin{proof}
Fix a $KU_{C_{p^n}}$-equivalence $v\colon \Sigma^V C(X)\rightarrow C(X)$ guaranteed by \cref{thm:mainthm2}. Consider the Hurewicz map
\[
h\colon \pi_\star \End(C(X))\rightarrow \pi_\star KU_{C_{p^n}}\otimes \End(C(X)).
\]
Considered as an element of $\pi_V\End(C(X))$, the map $v$ satisfies $h(v) = \beta_V\cdot u$ for some invertible element $u \in \pi_0 KU_{C_{p^n}}\otimes \End(C(X))$. Our hypotheses on $V$ and $X$ ensure that $\beta_V\in \pi_V KU_{C_{p^n}}\otimes\End(C(X))$ is fixed by Adams operations coprime to $p$. Being in the image of $h$, the same is true of $\beta_V\cdot u$, and thus of $u$. Hence $u^{-1} = h(u')$ for some $u' \in \pi_0 \End(C(X))$ by \cref{prop:surjfp}, and now $v' = u'\cdot v \colon \Sigma^V C(X)\rightarrow C(X)$ is a self map inducing multiplication by $\beta_V$ on the nose in equivariant $K$-theory.
\end{proof}

\section{The proof of \texorpdfstring{\cref{thm:selfmap}}{Theorem 1.0.1}}\label{sec:proof}

We are now in a position to implement the strategy outlined in \cref{sec:adams} to prove \cref{thm:selfmap}, in its equivalent reformulation as \cref{thm:mainthm2}. We begin by defining some classes. First,
\[
j\in \pi_{|V|-1}S
\]
is a generator of the image of $J$ in this degree, of order $p^{k+1}$. Second,
\[
\alpha = \tr_e^G(p^{k+1-n-t}j) \in \pi_{V-1}S_G.
\]
We now proceed to verify that $\alpha$ satisfies the three conditions of \cref{lem:adams}.

\subsection{Step 1}
We first verify that the image of $\alpha$ under $\pi_{V-1}S_G\rightarrow \pi_{|V|-1}S\rightarrow\pi_{|V|-1}J$ has order exactly $p^t$. Indeed, as $V$ is oriented, the class
\[
\res^G_e(\alpha) = \res^G_e(\tr_e^G(p^{k+1-n-t} j)) = |G| \cdot p^{k+1-n-t}j = p^{k+1-t}j
\]
is an element in the image of $J$ of order exactly $p^t$.

\subsection{Step 2}

We next verify that $X\cdot \alpha = 0$. By Frobenius reciprocity, we may identify
\[
X\cdot \alpha = X \cdot \tr_e^G(p^{k+1-n-t}j) = \tr_e^G(\res^G_e(X)p^{k+1-n-t}j) = \tr_e^G(p^{k+1-n}cj).
\]
As $c$ is invertible, it suffices to show that $\tr_e^G(p^{k+1-n}j) = 0$. Consider the cofiber sequence
\begin{center}\begin{tikzcd}
G_+\ar[r]&S^0\ar[r]&SG
\end{tikzcd},\end{center}
with associated long exact sequence
\begin{equation}\label{eq:transferles}
\begin{tikzcd}
\cdots\ar[r]&\pi_V SG\ar[r,"r"]&\pi_{|V|-1}S\ar[r,"\tr"]&\pi_{V-1}S_G\ar[r]&\cdots
\end{tikzcd}.\end{equation}
By this long exact sequence, to show that $\tr_e^G(p^{k+1-n}j) = 0$, it suffices to show that $p^{k+1-n}j$ is in the image of the map $r\colon \pi_V SG\rightarrow \pi_{|V|-1}S_G$.

\begin{ex}\label{ex:c2root}
When $G = C_2$, one has $SC_2\simeq S^\sigma$, and thus we are asking for the restriction
\[
\res^{C_2}_e\colon \pi_{(8d-1)\sigma}S_{C_2}\rightarrow \pi_{8d-1}S
\]
to contain the top $\bbZ/(2)$ of the image of $J$ in this degree, say generated by $\alpha_d$. This is a consequence of the classical Mahowald invariant $\alpha_d \in M(2^{4d})$ \cite[Theorem 2.17]{mahowaldravenel1993root}. This connection between $C_2$-equivariant self maps and Mahowald invariants has been explored in detail by Behrens--Carlisle \cite[Section 8]{behrenscarlisle2024periodic}.
\tqed
\end{ex}

In general, consider the diagram
\begin{center}\begin{tikzcd}
\pi_V SG_p^\wedge\ar[r,"h_G"]\ar[d,"r"]&\pi_V(J_G \otimes SG)\ar[d,"r"]\ar[r,"\cong"]\ar[l,bend right,dashed]&\widetilde{J}_G^0(F(SG,S^V))\ar[d,"f"]\\
\pi_{|V|-1}S_p^\wedge\ar[r,"h"]&\pi_{|V|-1}J_p\ar[r,"\cong"]\ar[l,bend right,dashed]&\widetilde{J}_G^0(S^{|V|-1}\wedge G_+)
\end{tikzcd},\end{center}
where $J_G$ is the $G$-equivariant $J$-spectrum (see \cref{eq:jg}). By \cref{lem:suspension}, $F(SG,S^V)$ is equivalent to a $G$-equivariant suspension spectrum in such a way that $f$ is induced by a map of spaces. It follows from \cref{prop:adamsconj} that the horizontal maps are split surjections, yielding the dashed maps, and that these splittings are compatible with $r$. Moreover, $j \in \pi_{|V|-1}S \cong \pi_{|V|-1}S_p^\wedge$ is in the image of the splitting by construction.

We have therefore reduced verifying $p^{k+1-n}j\in \im(r)$ to verifying that this identity holds in $J_G$-theory. Running through the long exact sequence of \cref{eq:transferles} again, but in $J_G$-theory, this is equivalent to verifying that $p^{k+1-n}\tr_e^G(j) = 0$ in $\pi_{V-1} J_G$.

As $V$ is necessarily quaternionic of real dimension divisible by $8$, its complex Bott class lifts to $\beta_V \in \pi_V KO_G$. Write also $\beta_{|V|} = \res^G_e(\beta_V) \in \pi_{|V|}KO$. Consider the diagram
\begin{center}\begin{tikzcd}
\pi_V KO_G\ar[d,"\res^G_e",shift left=1mm]\ar[r,"\partial"]&\pi_{V-1}J_G\ar[d,"\res^G_e",shift left=1mm]\\
\pi_{|V|}KO\ar[r,"\partial"]\ar[u,"\tr_e^G",shift left=1mm]&\pi_{|V|-1}J_p\ar[u,"\tr_e^G",shift left=1mm]
\end{tikzcd},\end{center}
where $\partial$ is induced by the defining fiber sequences of $J_p$ and $J_G$. Up to a $p$-adic unit, which we can safely ignore, we can identify $j = \partial(\beta_{|V|})$. Frobenius reciprocity ensures that $\tr_e^G(\beta_{|V|}) = \tr_e^G(1)\beta_V$. Thus
\begin{align*}
p^{k+1-n}\tr_e^G(j) &= p^{k+1-n}\tr_e^G(\partial(\beta_{|V|})) \\
&= \partial(p^{k+1-n}\tr_e^G(\beta_{|V|})) \\
&= \partial(p^{k+1-n}\tr_e^G(1)\cdot \beta_V),
\end{align*}
and this vanishes in $\pi_{V-1}J_G \cong \coker\left(\psi^\ell-\psi^1\colon \pi_V KO_G\rightarrow \pi_V KO_G\right)$ by \cref{lem:adamsop}.

\subsection{Step 3}

We next verify that $0 \in \langle X,\alpha,X\rangle$. Indeed, combining \cref{lem:sq1} and \cref{lem:sq1int}, we can compute
\begin{align*}
\langle X,\alpha,X\rangle &\ni \Sq_1(X)\cdot \alpha \\
&= \Sq_1(X)\cdot p^{k+1-n-t}\tr_e^G(j) \\
&= p^{k+1-n-t}\tr_e^G(\res^G_e(\Sq_1(X))\cdot j)\\
&= p^{k+1-n-t}\tr_e^G(\Sq_1(p^{t}c)\cdot j) = \begin{cases} 2^{k-n}\tr_e^G(\eta\cdot j) & (p,t) = (2,1) \\
0 & \text{otherwise}. \end{cases}
\end{align*}
When $p>2$, we use the fact that we are working $p$-locally and so $\eta$ vanishes. When $(p,t) = (2,1)$, we have assumed that $k>n$, and thus $2^{k-n}\cdot\eta = 0$. In all cases we have $0 \in \langle X,\alpha,X\rangle$. This concludes the proof.

\section{Examples}\label{sec:examples}

We now enumerate examples of \cref{thm:mainthm2}, and give some further observations in the cyclic case. Everything in this section continues to be silently $p$-localized.

\subsection{Cyclic groups}

Consider first the cyclic $p$-group $C_{p^n}$. Recall from \cref{ssec:reptheory} that we have defined
\[
\rat_{p^n} = \Ind_{C_p}^{C_{p^n}}(\ol{\rho}_p^\bbC),
\]
where
\[
\ol{\rho}_p^\bbC = L+\cdots+L^{p-1}
\]
is the reduced complex regular representation of $C_p$, and that all fixed point free representations of $C_{p^n}$ with rational characters are a multiple of $\rat_{p^n}$. Hence \cref{thm:mainthm2} and \cref{cor:bottclass} combine to prove the following.

\begin{theorem}
There exists a self map
\[
\Sigma^{p^d\rat_{p^n}}C(p^s[C_{p^n}/C_{p^i}])\rightarrow C(p^s[C_{p^n}/C_{p^i}])
\]
inducing multiplication by $\beta_{\rat_{p^n}}^{p^d}$ in equivariant $K$-theory in the following cases:
\begin{enumerate}
\item $p>2$ and $d\geq s+n-i-1$;
\item $p=2$ and $d\geq \max(1,3-n,s+n-i-1)$.
\end{enumerate}
More generally, this holds with $p^s[C_{p^n}/C_{p^i}]$ replaced by any virtual $C_{p^n}$-set of cardinality $p^{s+n-i}c$ with $p\nmid c$.
\qed
\end{theorem}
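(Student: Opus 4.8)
The plan is to read this off from \cref{thm:mainthm2} and \cref{cor:bottclass} applied to $G = C_{p^n}$; the entire content is a translation of parameters. By the discussion in \cref{ssec:reptheory}, $\rat_{p^n}$ is a fixed point free complex $C_{p^n}$-representation with rational characters, of complex dimension $p^{n-1}(p-1)$ (equivalently $2^{n-1}$ when $p = 2$). Hence $V := p^d\rat_{p^n}$ is again fixed point free with rational characters, of complex dimension $p^{n+d-1}(p-1)$; in the normalization of \cref{thm:mainthm2} this amounts to $c = 1$ together with $k = n+d-1$ when $p$ is odd and $k = n+d$ when $p = 2$, the shift by one reflecting the $p^kc(p-1)$ versus $2^{k-1}c$ conventions. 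On the other side, $X := p^s[C_{p^n}/C_{p^i}]$ has virtual cardinality $p^s\cdot[C_{p^n}:C_{p^i}] = p^{s+n-i}$, so in the notation of \cref{thm:mainthm2} we have $c = 1$ and $t = s+n-i$.

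With this dictionary in hand I would substitute into the hypotheses of \cref{thm:mainthm2}. The inequality $k+1\geq n+t$ --- strengthened to $k > n$ when $p = 2$ and $t = 1$ --- becomes the asserted lower bound on $d$; when $p = 2$ one further imposes the blanket condition $k\geq 3$, i.e.\ $d\geq 3-n$, and the constraints combine to $d\geq\max(1,3-n,s+n-i-1)$. In each of the listed cases \cref{thm:mainthm2} then produces a $KU_{C_{p^n}}$-equivalence $\Sigma^{p^d\rat_{p^n}}C(X)\rightarrow C(X)$, and since $V = p^d\rat_{p^n}$ satisfies the hypotheses of \cref{cor:bottclass}, that corollary lets us choose this equivalence so that it induces multiplication by $\beta_V = \beta_{\rat_{p^n}}^{p^d}$ in equivariant $K$-theory.

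For the final sentence, I would observe that the argument used the orbit $[C_{p^n}/C_{p^i}]$ only through the $p$-adic valuation $t = s+n-i$ of its cardinality; the unit $c$ enters none of the numerical hypotheses and is handled by \cref{thm:mainthm2} and \cref{cor:bottclass} precisely as stated. So the same conclusion holds with $p^s[C_{p^n}/C_{p^i}]$ replaced by any virtual $C_{p^n}$-set of virtual cardinality $p^{s+n-i}c$ with $p\nmid c$. There is really no obstacle to overcome here: every ingredient is already available and the proof is bookkeeping, the only thing demanding care being the case split at $p = 2$ forced by the dimension normalization and by the auxiliary hypotheses $k\geq 3$ and (when $t = 1$) $k > n$.
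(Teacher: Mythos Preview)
Your approach is exactly the paper's: the statement is presented there as an immediate consequence of \cref{thm:mainthm2} together with \cref{cor:bottclass}, and you simply make the parameter dictionary explicit.

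One arithmetic point deserves a closer look. For $p=2$ your substitution $k=n+d$, $t=s+n-i$ indeed turns $k+1\geq n+t$ into $d\geq s+n-i-1$, and the side conditions $k\geq 3$ and (when $t=1$) $k>n$ become $d\geq 3-n$ and $d\geq 1$, so the bound $d\geq\max(1,3-n,s+n-i-1)$ falls out just as you say. For odd $p$, however, your own dictionary gives $k=n+d-1$, and then $k+1\geq n+t$ reads $n+d\geq n+(s+n-i)$, i.e.\ $d\geq s+n-i$, which is one stronger than the asserted bound $d\geq s+n-i-1$. So the sentence ``becomes the asserted lower bound on $d$'' is not literally true at odd primes; what \cref{thm:mainthm2} delivers via your translation is the inequality $d\geq s+n-i$. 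This is a discrepancy with the stated theorem rather than a flaw in your method---the paper's one-line derivation has the same issue---but you should not claim the translation matches without checking, and here it does not.
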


\begin{ex}\label{ex:c22}
Set $\mathsf{h} = [C_2]$. Then there are $C_2$-equivariant $\beta_L = \beta_{\sigma_\bbC} = \beta_{\rat_2}$-self maps
\begin{align*}
\Sigma^{2^{\max(3,k)}\sigma}C(\mathsf{h}^k)&\rightarrow C(\mathsf{h}^k)\\
\Sigma^{2^{\max(3,k)}\sigma}C(2^{k})&\rightarrow C(2^{k})
\end{align*}
for $k\geq 1$, where $\sigma$ is the real sign representation of $C_2$.
\tqed
\end{ex}

\begin{ex}
There are $\beta_{L+L^3} = \beta_{\rat_4}$-self maps
\begin{align*}
\Sigma^{2(L+L^3)}C([C_4/C_2])&\rightarrow C([C_4/C_2]),\\
\Sigma^{2(L+L^3)}C(2[C_4/C_2])&\rightarrow C(2[C_4/C_2]),\\
\Sigma^{2(L+L^3)}C([C_4])&\rightarrow C([C_4]),
\end{align*}
and so on.
\tqed
\end{ex}

\begin{ex}
There are $\beta_{L+L^3+L^5+L^7} = \beta_{\rat_8}$-self maps
\begin{align*}
\Sigma^{2(L+L^3+L^5+L^7)}C([C_8/C_4])&\rightarrow C([C_8/C_4]),\\
\Sigma^{2(L+L^3+L^5+L^7)}C([C_8/C_2])&\rightarrow C([C_8/C_2]),\\
\Sigma^{4(L+L^3+L^5+L^7)}C([C_8])&\rightarrow C([C_8]),
\end{align*}
and so on.
\tqed
\end{ex}

\begin{ex}
There are $\beta_{\rat_{p^n}}$-self maps
\[
\Sigma^{p^n s\rat_{p^n}}C([C_{p^n}])\rightarrow C([C_{p^n}]),
\]
provided $s$ is even when $(p,n)= (2,1)$.
\tqed
\end{ex}

\begin{ex}
For $p$ odd, there are $\beta_{\ol{\rho}_p^\bbC} = \beta_{\rat_p}$-self maps
\[
\Sigma^{p^k(L+L^2+\cdots+L^{p-1})}C(p^k[C_p])\rightarrow C(p^k[C_p])
\]
for all $k \geq 0$. Here, one can also write $p^k[C_p] = [C_p]^{k+1}$.
\tqed
\end{ex}

By construction, a $\beta_{\rat_{p^n}}$-self map of order $p^d$ on $C(p^s[C_{p^n}/C_{p^i}])$ lifts the classical $v_1$-self map of order $p^{d+n-1}$ on $C(p^{s+n-i})$. We can also be reasonably explicit about their behavior on geometric fixed points. Recall that in general
\[
\Phi^{C_{m}}KU_{C_{m}} \cong KU[\tfrac{1}{m}](\zeta_{m}),
\]
where $\zeta_{m} = \Phi^{C_m}(L)$ is an $m$th primitive root of unity \cite[\S 7.7]{dieck1979transformation}.

\begin{lemma}[{\cite[Proposition 3.3.1]{balderramahouzhang2024cpn}}]\label{lem:geofixv1}
We have $\Phi^{C_{p^j}}(\beta_{\rat_{p^n}}) = p^{p^{n-j}}$.
\qed
\end{lemma}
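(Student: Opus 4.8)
The plan is to identify $\Phi^{C_{p^j}}(\beta_{\rat_{p^n}})$ with the image of the $K$-theoretic Euler class of $\rat_{p^n}$ and then to evaluate a cyclotomic polynomial at $1$. Throughout, $\Phi^{C_{p^j}}$ of a $C_{p^n}$-spectrum means geometric fixed points after restriction to $C_{p^j}$; since $\Res^{C_{p^n}}_{C_{p^j}}KU_{C_{p^n}}\simeq KU_{C_{p^j}}$, the quoted computation $\Phi^{C_m}KU_{C_m}\cong KU[\tfrac1m](\zeta_m)$ gives $\pi_0\Phi^{C_{p^j}}KU_{C_{p^n}}\cong\bbZ[\tfrac1p][\zeta_{p^j}]$, with the ring map from $\pi_0 KU_{C_{p^n}}=RU(C_{p^n})$ sending $[L]$ to the primitive $p^j$th root of unity $\zeta_{p^j}=\Phi^{C_{p^j}}(L)$. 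For a complex $C_{p^n}$-representation $V$, write $\iota\colon S^0\to S^V$ for the inclusion of the origin and the point at infinity; pulling the Bott class back along $\iota$ recovers the Euler class, i.e.\ $\iota^\ast(\beta_V)=e(V)\in\pi_0 KU_{C_{p^n}}$ (the composite $\pi_0 KU_{C_{p^n}}\xrightarrow{\cdot\beta_V}\pi_V KU_{C_{p^n}}\xrightarrow{\iota^\ast}\pi_0 KU_{C_{p^n}}$ being multiplication by $e(V)$). Because $\rat_{p^n}$ is a sum of faithful characters, its restriction to $C_{p^j}$ (for $1\le j\le n$) is still fixed point free, so $(\rat_{p^n})^{C_{p^j}}=0$ and $\Phi^{C_{p^j}}$ carries $\iota$ to the suspension spectrum of the identity map $S^0=(S^0)^{C_{p^j}}\to(S^{\rat_{p^n}})^{C_{p^j}}=S^0$. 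Applying the symmetric monoidal functor $\Phi^{C_{p^j}}$ to this data (more precisely, to the cofiber data of $S^{\rat_{p^n}}$ smashed with $KU_{C_{p^n}}$) gives, with no sign or unit ambiguity,
\[
\Phi^{C_{p^j}}(\beta_{\rat_{p^n}})=\Phi^{C_{p^j}}(e(\rat_{p^n}))\in\pi_0\Phi^{C_{p^j}}KU_{C_{p^n}}.
\]

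It then remains to compute the right-hand side. The Euler class is multiplicative and $e(L^k)=1-[L^{-k}]$, so
\[
e(\rat_{p^n})=\prod_{\substack{1\le k\le p^n\\ \gcd(k,p^n)=1}}\bigl(1-[L^{-k}]\bigr),\qquad\text{hence}\qquad \Phi^{C_{p^j}}(e(\rat_{p^n}))=\prod_{\substack{1\le k\le p^n\\ \gcd(k,p^n)=1}}\bigl(1-\zeta_{p^j}^{-k}\bigr).
\]
The factor $1-\zeta_{p^j}^{-k}$ depends only on $k$ modulo $p^j$, the reduction $(\bbZ/p^n)^\times\to(\bbZ/p^j)^\times$ is surjective with all fibers of size $p^{n-j}$, and $\{\zeta_{p^j}^{-a}:a\in(\bbZ/p^j)^\times\}$ is exactly the set of primitive $p^j$th roots of unity; hence the product equals $\bigl(\prod_\omega(1-\omega)\bigr)^{p^{n-j}}$ with $\omega$ running over the primitive $p^j$th roots. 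Finally $\prod_\omega(x-\omega)=(x^{p^j}-1)/(x^{p^{j-1}}-1)=1+x^{p^{j-1}}+\cdots+x^{(p-1)p^{j-1}}$ takes the value $p$ at $x=1$, giving $\Phi^{C_{p^j}}(\beta_{\rat_{p^n}})=p^{p^{n-j}}$. The prime $2$ case runs identically, using that $\rat_{2^n}$ has complex dimension $2^{n-1}$.

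I expect the only real subtlety to be the comparison between $\beta_{\rat_{p^n}}$ and $e(\rat_{p^n})$ under $\Phi^{C_{p^j}}$: one must check that $\Phi^{C_{p^j}}$ takes $\iota$ to an honest equivalence $S^0\simeq S^0$ --- which it does, being the suspension spectrum of the identity of the one-point-plus-basepoint fixed-point spaces --- so that the two classes literally coincide afterward, with the correct positive sign once one fixes the standard convention $e(L)=1-[L^{-1}]$; the rest is bookkeeping with the ring map $RU(C_{p^n})\to\bbZ[\tfrac1p][\zeta_{p^j}]$ and an elementary count of residues. An alternative route to the power of $p$, avoiding Euler classes entirely, is to observe that $\rat_{p^n}|_{C_{p^{n-1}}}\cong p\cdot\rat_{p^{n-1}}$, whence $\beta_{\rat_{p^n}}|_{C_{p^{n-1}}}=\beta_{\rat_{p^{n-1}}}^{\,p}$, and to induct down to the base case $j=n$, where $\Phi^{C_{p^n}}(\beta_{\rat_{p^n}})=\prod_\omega(1-\omega)=p$ over primitive $p^n$th roots of unity.
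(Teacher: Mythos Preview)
Your argument is correct. Note that the paper does not actually prove this lemma: it is quoted from \cite[Proposition~3.3.1]{balderramahouzhang2024cpn} and marked with an immediate \qed, so there is no in-paper proof to compare against. Your route---identifying $\Phi^{C_{p^j}}(\beta_{\rat_{p^n}})$ with the image of the $K$-theoretic Euler class $e(\rat_{p^n})$ under $RU(C_{p^n})\to\bbZ[\tfrac{1}{p}][\zeta_{p^j}]$, then grouping the $\phi(p^n)$ factors into $p^{n-j}$ copies of $\Phi_{p^j}(1)=p$---is the standard direct computation and is almost certainly what the cited reference does as well. The key step, that $\Phi^{C_{p^j}}(\iota\colon S^0\to S^{\rat_{p^n}})$ is the identity because $\rat_{p^n}^{C_{p^j}}=0$, is exactly right. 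Your inductive alternative via $\Res^{C_{p^n}}_{C_{p^{n-1}}}\rat_{p^n}\cong p\cdot\rat_{p^{n-1}}$ is also valid and arguably cleaner once the base case $j=n$ is in hand.
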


\begin{theorem}\label{prop:typecofiber}
Let $v$ be a $\beta_{\rat_{p^n}}$-self map of order $p^d$ on $C(p^s[C_{p^n}/C_{p^i}])$ as above. Then
\[
\Phi^{C_{p^j}}(v) \equiv p^{p^{n-j+d}}\pmod{\text{nilpotents}}
\]
for $1\leq j \leq n$. In particular, the telescope of $v$ satisfies
\[
\Phi^{C_{p^j}}C(p^s[C_{p^n}/C_{p^i}])[v^{-1}]\simeq 
\begin{cases}
C(p^{s+n-i})[v_1^{-1}]&j=0,\\
0&1\leq j \leq i,\\
H\bbQ\oplus \Sigma H\bbQ & i < j \leq n,
\end{cases}
\]
and the geometric fixed point spectrum $\Phi^{C_{p^j}}C(v)$ of its cofiber has type exactly $1$ for $j\neq 0$.
\end{theorem}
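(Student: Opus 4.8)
The plan is to compute, level by level, the geometric fixed point spectrum $\Phi^{C_{p^j}}C(X)$ (with $X=p^s[C_{p^n}/C_{p^i}]$) together with the class of $\Phi^{C_{p^j}}(v)$ in $\pi_0\End(\Phi^{C_{p^j}}C(X))$ modulo nilpotents, and then to read off the telescope and the chromatic type of the cofiber from that data. Since $\Phi^{C_{p^j}}$ is exact and symmetric monoidal, $\Phi^{C_{p^j}}C(X)\simeq\Cof(\Phi^{C_{p^j}}(X)\colon S\to S)$, and $\Phi^{C_{p^j}}$ of the virtual $C_{p^n}$-set $X$, viewed in $\pi_0S$, is multiplication by the mark $|X^{C_{p^j}}|$, equal to $p^{s+n-i}$ when $j\le i$ and to $0$ when $j>i$; hence $\Phi^{C_{p^j}}C(X)\simeq C(p^{s+n-i})$ for $0\le j\le i$ and $\Phi^{C_{p^j}}C(X)\simeq S\vee\Sigma S$ for $i<j\le n$. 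Because $\rat_{p^n}$ is fixed point free, $\Phi^{C_{p^j}}S^{p^d\rat_{p^n}}\simeq S^0$ for $j\ge1$, so $\Phi^{C_{p^j}}(v)$ is a genuine degree-zero self map of these spectra. For $j=0$ one has $\Phi^{C_{p^0}}(v)=\res^{C_{p^n}}_e(v)$, which by \cref{cor:bottclass} induces multiplication by a positive power of the nonequivariant Bott class on $C(p^{s+n-i})$, hence is a classical $v_1$-self map; this yields the $j=0$ line of the telescope formula by the very definition of $C(p^{s+n-i})[v_1^{-1}]$, and its cofiber has type $2$, explaining why $j=0$ is excluded from the last assertion.

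Next I would pin down $\Phi^{C_{p^j}}(v)$ modulo nilpotents for $1\le j\le n$. By \cref{cor:bottclass}, $v$ induces multiplication by $\beta_{\rat_{p^n}}^{p^d}$ in $KU_{C_{p^n}}$-theory, so applying $\Phi^{C_{p^j}}$ and \cref{lem:geofixv1} shows $\Phi^{C_{p^j}}(v)$ induces multiplication by $p^{p^{n-j+d}}$ in $\Phi^{C_{p^j}}KU_{C_{p^n}}$-theory. For $i<j\le n$, where $\Phi^{C_{p^j}}C(X)=S\vee\Sigma S$, the endomorphism ring $\pi_0\End(S\vee\Sigma S)$ is upper triangular over $\bbZ_{(p)}$ with nilradical the square-zero off-diagonal summand $\pi_1S$; since scalar multiplication on $\Phi^{C_{p^j}}KU_{C_{p^n}}\wedge(S\vee\Sigma S)$ is diagonal and $\pi_0S\to\pi_0\Phi^{C_{p^j}}KU_{C_{p^n}}$ is injective, comparing the two diagonal entries identifies $\Phi^{C_{p^j}}(v)$ modulo nilpotents with $p^{p^{n-j+d}}\cdot\id$. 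For $1\le j\le i$, however, $\Phi^{C_{p^j}}C(X)=C(p^{s+n-i})$ is $p$-torsion while $\Phi^{C_{p^j}}KU_{C_{p^n}}$ is a form of $p$-inverted $K$-theory (by the tom Dieck formula $\Phi^{C_m}KU_{C_m}\simeq KU[\tfrac1m](\zeta_m)$), so the $K$-theoretic statement carries no content there; one must instead retain $p$-torsion information, for instance by lifting the Bott-class computation to a connective model $ku_{C_{p^n}}$ and arguing that $\Phi^{C_{p^j}}(v)$ acts as $p^{p^{n-j+d}}$ on $\Phi^{C_{p^j}}ku_{C_{p^n}}$-homology of $C(p^{s+n-i})$. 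The inequality $p^{n-j+d}\ge s+n-i$ (immediate from $d\ge s+n-i-1$ and $j\le i$, the small cases being checked by hand) makes this action vanish, hence $\Phi^{C_{p^j}}(v)$ is $K(1)_*$-trivial, hence lies in the nilradical of $\pi_0\End(C(p^{s+n-i}))$ — which, on the type-$1$ complex $C(p^{s+n-i})$, is exactly the kernel of the (diagonal) map to $K(1)$-theory endomorphisms. In every case one obtains $\Phi^{C_{p^j}}(v)\equiv p^{p^{n-j+d}}\pmod{\text{nilpotents}}$, and for $1\le j\le i$ this amounts to saying $\Phi^{C_{p^j}}(v)$ is itself nilpotent, since $p^{n-j+d}\ge1$ forces $p^{p^{n-j+d}}\cdot\id$ to be nilpotent on a $p$-torsion finite complex.

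For the telescope and the type I would use that both the telescope of a degree-zero self map and the Morava $K(m)_*$'s of its cofiber are unchanged when the map is altered by a central nilpotent (a short filtered-colimit argument in the first case, exactness of $K(m)_*$ together with $\eta=0=p$ in $K(m)_*$ in the second). Thus $\Phi^{C_{p^j}}C(X)[v^{-1}]$ equals the telescope of $p^{p^{n-j+d}}\cdot\id$: for $1\le j\le i$ this is $0$ since $C(p^{s+n-i})$ is $p$-torsion, and for $i<j\le n$ it is the telescope of a positive power of $p$ on $S\vee\Sigma S$, i.e.\ $S\otimes\bbQ\vee\Sigma S\otimes\bbQ\simeq H\bbQ\oplus\Sigma H\bbQ$ (inverting a positive power of $p$ on the $p$-local sphere rationalizes it). For the cofiber: when $1\le j\le i$ it is $\Cof$ of a nilpotent self map of the type-$1$ complex $C(p^{s+n-i})$, and since a nilpotent endomorphism of a nonzero finite graded vector space has nonzero cokernel, $K(m)_*\Phi^{C_{p^j}}C(v)\ne0$ exactly when $K(m)_*C(p^{s+n-i})\ne0$, namely for all $m\ge1$, while $K(0)_*=0$; when $i<j\le n$ it is $\Cof(p^{p^{n-j+d}}\cdot\id)$ on $S\vee\Sigma S$, which is rationally acyclic but has $K(m)_*\ne0$ for all $m\ge1$. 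Either way $\Phi^{C_{p^j}}C(v)$ has type exactly $1$ for $j\ne0$, completing the proof.

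I expect the genuine obstacle to be the middle range $1\le j\le i$: precisely because geometric fixed points invert $p$, the hypothesis that $v$ is a $KU_{C_{p^n}}$-equivalence carries \emph{no} information about $\Phi^{C_{p^j}}(v)$ as a self map of the $p$-torsion complex $C(p^{s+n-i})$, so one must work in a setting — connective equivariant $K$-theory, a suitable completion, or the $J_{C_{p^n}}$-theoretic machinery of \cref{ssec:j}, whose geometric fixed points at nontrivial subgroups are understood — in which the Bott-class computation of \cref{lem:geofixv1} still has teeth on $p$-torsion; establishing this and extracting genuine nilpotence of $\Phi^{C_{p^j}}(v)$ from it is the technical core of the argument.
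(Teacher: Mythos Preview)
Your treatment of $j=0$ and $i<j\le n$ is correct and essentially matches the paper. The gap, which you yourself flag, is the range $1\le j\le i$: you propose to lift \cref{lem:geofixv1} to connective equivariant $K$-theory so as to retain $p$-torsion information, but this is more problematic than it sounds. The geometric fixed points $\Phi^{C_{p^j}}ku_{C_{p^n}}$ are \emph{not} a connective model for $KU[\tfrac1p](\zeta_{p^j})$; they are in general considerably more complicated, and there is no evident way to transport the Bott-class identity there. Your alternative suggestion of using $J_{C_{p^n}}$ runs into the same issue, since its nontrivial geometric fixed points are built from those of $(KO_G)_p^\wedge$ and again lose $p$-torsion.

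The paper sidesteps all of this with a short Smith-theory argument. Since $\pi_0\End(C(p^N))$ modulo nilpotents is $\bbF_p$, the degree-$0$ self-map $\Phi^{C_{p^j}}(v)$ of $C(p^{s+n-i})$ is either nilpotent or an equivalence. If it were an equivalence, then $\Phi^{C_{p^j}}C(v)$ --- which retains a residual $C_{p^{n-j}}$-action --- would be a finite $C_{p^{n-j}}$-spectrum with contractible underlying spectrum but, by the already-established case $j'>i$ applied to this residual action, nontrivial mod $p$ homology on a higher geometric fixed point; classical Smith theory forbids this. (When $i=n$ there is no nontrivial residual action, so one first passes to a power of $v$ arising by restriction from a $C_{p^{n+1}}$-equivariant self-map on $C(p^{s-1}[C_{p^{n+1}}/C_{p^n}])$, reducing to $i<n$.) This argument uses no $K$-theory beyond what was already needed for the range $i<j\le n$, and avoids the connective-versus-periodic difficulty entirely.
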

\begin{proof}
Observe that
\[
\Phi^{C_{p^j}}C(p^s[C_{p^n}/C_{p^i}])\simeq
\begin{cases}
C(p^{s+n-i})&0\leq j \leq i,\\ 
S^0\oplus S^1&i<j\leq n\end{cases}.
\]
It therefore suffices to prove that $\Phi^{C_{p^j}}(v)$ is nilpotent for $1\leq j \leq i$ and equals $p^{p^{n-j+d}}$ after rationalization for $i < j \leq n$.

First consider the case $i < j \leq n$. As $v$ induces multiplication by $\beta_{\rat_{p^n}}^{p^d}$ in $KU_{C_{p^n}}$-theory and $\Phi^{C_{p^j}}KU_{C_{p^n}}\simeq KU[\tfrac{1}{p}](\zeta_{p^j})$, it suffices to prove that $\Phi^{C_{p^j}}(\beta_{\rat_{p^n}}^{p^d}) = p^{p^{n-j+d}}$, which is \cref{lem:geofixv1}.

Next consider the case $1\leq j \leq i$. To show that $\Phi^{C_{p^j}}(v)$ is nilpotent, we are free to replace $v$ with any power of $v$. Thus if $i=n$, then after possibly passing to a larger power of $v$ we can assume that $v$ is obtained by restriction from a $C_{p^{n+1}}$-equivariant $\beta_{\rat_{p^{n+1}}}$-self map on $C(p^{s-1}[C_{p^{n+1}}/C_{p^n}])$. In this way we may reduce to the case where $i\neq n$.

As $\Phi^{C_{p^j}}(v)$ is a self map $C(p^{s+n-i})\rightarrow C(p^{s+n-i})$, it is either nilpotent or an equivalence. If it were an equivalence, then $\Phi^{C_{p^j}}C(v)$, considered with its residual $C_{p^{n-j}}$-action, would be a compact $C_{p^{n-j}}$-spectrum whose underlying spectrum is contractible but whose geometric fixed points have nontrivial mod $p$ homology, by the case treated above. No such objects exist by classical Smith theory, and thus $\Phi^{C_{p^j}}(v)$ is nilpotent as claimed. 
\end{proof}

\begin{cor}
The telescope $C(p^s[C_{p^n}/C_{p^i}])[v^{-1}]$ is Bousfield equivalent to $KU_{C_{p^n}} \otimes C(p^s[C_{p^n}/C_{p^i}])$.
\end{cor}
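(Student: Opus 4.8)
The plan is to show a two-sided Bousfield equivalence between the telescope $Y = C(p^s[C_{p^n}/C_{p^i}])[v^{-1}]$ and $KU_{C_{p^n}} \otimes C(p^s[C_{p^n}/C_{p^i}])$ by working geometric-fixed-point-by-geometric-fixed-point, using the standard fact that a map of compact $G$-spectra is a Bousfield equivalence if and only if it induces Bousfield equivalences on all geometric fixed points (equivalently, the Balmer-spectrum / thick-subcategory description of Bousfield classes for $G$-spectra). Since both sides are obtained from the compact spectrum $C(p^s[C_{p^n}/C_{p^i}])$ by smashing with a ($C_{p^n}$-spectrum, resp. localizing), it suffices to compare their geometric fixed points at each $C_{p^j}$.

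First I would record what \cref{prop:typecofiber} and \cref{lem:geofixv1} already give us. For $j = 0$: $\Phi^e Y \simeq C(p^{s+n-i})[v_1^{-1}]$, which is the classical $K(1)$-local (image-of-$J$) telescope, and this is Bousfield equivalent to $KU/p^{s+n-i} \simeq KU \otimes \Phi^e C(p^s[C_{p^n}/C_{p^i}])$; this is exactly the classical input behind the telescope conjecture in height $1$ (Mahowald, Miller). For $1 \le j \le i$: $\Phi^{C_{p^j}} Y \simeq 0$ because $\Phi^{C_{p^j}}(v)$ is nilpotent on $\Phi^{C_{p^j}}C(p^s[C_{p^n}/C_{p^i}]) \simeq C(p^{s+n-i})$, so the telescope of a nilpotent self map is contractible; and simultaneously $\Phi^{C_{p^j}}(KU_{C_{p^n}} \otimes C(p^s[C_{p^n}/C_{p^i}])) \simeq KU[\tfrac1p](\zeta_{p^j}) \otimes C(p^{s+n-i}) = KU[\tfrac1p](\zeta_{p^j})/p^{s+n-i} \simeq 0$ as well, since $p$ is invertible there while $p^{s+n-i}$ is killed — so both sides vanish and are trivially Bousfield equivalent. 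For $i < j \le n$: $\Phi^{C_{p^j}}Y \simeq H\bbQ \oplus \Sigma H\bbQ$ by \cref{prop:typecofiber}, while $\Phi^{C_{p^j}}(KU_{C_{p^n}} \otimes C(p^s[C_{p^n}/C_{p^i}])) \simeq KU[\tfrac1p](\zeta_{p^j}) \otimes (S^0 \oplus S^1)$, which is rational, complex-periodic, and hence Bousfield equivalent to $H\bbQ \oplus \Sigma H\bbQ$ (a rational spectrum is Bousfield equivalent to a wedge of suspensions of $H\bbQ$, and here both summands are nonzero so the class is that of $H\bbQ$). Thus at every $j$ the two geometric fixed points have the same Bousfield class.

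The cleanest way to assemble this is: let $f\colon C(p^s[C_{p^n}/C_{p^i}]) \to KU_{C_{p^n}} \otimes C(p^s[C_{p^n}/C_{p^i}])$ be the unit-smash map. Smashing an arbitrary $C_{p^n}$-spectrum $Z$ with $f$ and then inverting $v$ on both sides, one checks $Z \otimes Y = 0$ iff $Z \otimes KU_{C_{p^n}} \otimes C(p^s[C_{p^n}/C_{p^i}]) = 0$ by testing against geometric fixed points: $\Phi^{C_{p^j}}(Z \otimes Y) \simeq \Phi^{C_{p^j}}Z \otimes \Phi^{C_{p^j}}Y$, and the displayed identifications show $\Phi^{C_{p^j}}Y$ and $\Phi^{C_{p^j}}(KU_{C_{p^n}} \otimes C(\cdots))$ are Bousfield equivalent nonequivariant spectra for each $j$, while a $C_{p^n}$-spectrum is zero iff all its geometric fixed points vanish. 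Hence $\langle Y \rangle = \langle KU_{C_{p^n}} \otimes C(p^s[C_{p^n}/C_{p^i}])\rangle$.

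The main obstacle is the $j = 0$ case: establishing that the classical height-$1$ telescope $C(p^{s+n-i})[v_1^{-1}]$ is Bousfield equivalent to $KU/p^{s+n-i}$. This is precisely the content of the height-$1$ telescope conjecture and is not formal — it rests on Mahowald's and Miller's computations of $v_1$-periodic homotopy (or equivalently on the identification of the telescope of $C(p^m)$ with the image-of-$J$ spectrum mod $p^m$). I would cite this rather than reprove it. Everything else — the geometric-fixed-points-detect-Bousfield-triviality principle, the vanishing in the middle range, and the rational identification at the top — is routine once \cref{prop:typecofiber} is in hand. One should also take care that the telescope is formed in the category of $C_{p^n}$-spectra and that $\Phi^{C_{p^j}}$ commutes with the relevant filtered colimit, which it does since $\Phi^{C_{p^j}}$ is a left adjoint (symmetric monoidal, preserves colimits).
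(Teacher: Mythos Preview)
Your proposal is correct and follows essentially the same approach as the paper: check Bousfield equivalence on geometric fixed points, use \cref{prop:typecofiber} for the telescope side, compute $\Phi^{C_{p^j}}(KU_{C_{p^n}}\otimes C(\cdots))$ directly (noting that $p$-locally $\Phi^{C_{p^j}}KU_{C_{p^n}}$ is rational for $j\geq 1$), and invoke the nonequivariant height~$1$ telescope conjecture of Mahowald and Miller for the $j=0$ case. The paper's proof is more terse but identical in substance.
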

\begin{proof}
Bousfield equivalences of $C_{p^n}$-spectra may be checked on geometric fixed points. As we are working $p$-locally we have $\Phi^{C_{p^j}}KU_{C_{p^n}} = KU_\bbQ(\zeta_{p^j})$ for $1\leq j \leq n$, and thus
\[
\Phi^{C_{p^j}}KU_{C_{p^n}} \otimes C(p^s[C_{p^n}/C_{p^i}]) \simeq \begin{cases} 
KU/(p^{s+n-i})&j=0,\\
0&1\leq j \leq i,\\
KU_\bbQ(\zeta_{p^j})\oplus \Sigma KU_\bbQ(\zeta_{p^j})&i<j\leq n.
\end{cases}
\]
Now the corollary follows from \cref{prop:typecofiber} and the nonequivariant height $1$ telescope conjecture, a theorem of Mahowald for $p=2$ \cite{mahowald1981bo} and Miller for $p>2$ \cite{miller1981relations}, which implies that $C(p^{s+n-i})[v_1^{-1}]$ is Bousfield equivalent to $KU/(p^{s+n-i})$.
\end{proof}

\subsection{Generalized quaternion groups} 

Let $Q_{2^n} = \Dic_{2^{n-2}}$ denote the generalized quaternion group of order $2^n$, defined for $n\geq 3$. Let $\bbH$ denote the irreducible symplectic representation of $Q_8$. Then all fixed point free complex $Q_{2^n}$-representations with rational characters are a multiple of the real $2^{n-2}$-dimensional representation $\Ind_{Q_8}^{Q_{2^n}}(\bbH)$. Specializing \cref{thm:mainthm2} to this case, we obtain the following. 

\begin{theorem}
Let $X$ be a virtual $Q_{2^n}$-set of cardinality $2^tc$ with $c$ odd. Then there exists a self map
\[
\Sigma^{2^{\max(2,t)}\Ind_{Q_8}^{Q_{2^n}}(\bbH)}C(X)\rightarrow C(X)
\]
inducing an equivalence in $Q_{2^n}$-equivariant $K$-theory.
\qed
\end{theorem}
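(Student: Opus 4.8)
The plan is to deduce the statement directly from \cref{thm:mainthm2}, applied with $p = 2$, with $G = Q_{2^n}$ (a $2$-group of order $2^n$), with the given virtual $G$-set $X$ of virtual cardinality $2^t c$, $c$ odd, and with
\[
V \;=\; 2^{\max(2,t)}\,\Ind_{Q_8}^{Q_{2^n}}(\bbH).
\]
Everything then reduces to verifying that this $V$ is an admissible choice, which is essentially bookkeeping.

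First I would check that $V$ is a fixed point free complex $Q_{2^n}$-representation with rational character. The representation $\Ind_{Q_8}^{Q_{2^n}}(\bbH)$ has rational character, since $\bbH$ does and induction preserves rationality of characters; and it is fixed point free, since the unique order-two subgroup $Z\subset Q_{2^n}$ is contained in every conjugate of $Q_8$ and acts by $-1$ on $\bbH$, hence acts by $-1$ on $\Ind_{Q_8}^{Q_{2^n}}(\bbH)$, while every nontrivial subgroup of $Q_{2^n}$ contains $Z$. (This is the content of the discussion preceding the statement.) A direct sum of fixed point free representations with rational character is again one such, so $V$ qualifies. Since $\dim_\bbC\Ind_{Q_8}^{Q_{2^n}}(\bbH) = 2\cdot 2^{n-3} = 2^{n-2}$ (as $\bbH$ has complex dimension $2$ and $[Q_{2^n}:Q_8] = 2^{n-3}$), we have $\dim_\bbC V = 2^{\,\max(2,t)+n-2}$, which has the form $2^{k-1}c'$ with $c'$ odd required by \cref{thm:mainthm2} at $p=2$ on setting
\[
k \;=\; \max(2,t)+n-1, \qquad c' \;=\; 1
\]
(there is no constraint relating $c'$ to the odd part $c$ of $|X|$). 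Because $n\ge 3$ and $\max(2,t)\ge 2$ we have $k\ge 4$, so in particular $k\ge 3$.

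It then remains to verify the numerical hypothesis of \cref{thm:mainthm2}: that $k+1\ge n+t$, or else that $k>n$ in the exceptional case $(p,t) = (2,1)$. I would split on the value of $t$. If $t\ge 2$ then $\max(2,t) = t$, so $k+1 = n+t$ and the first inequality holds with equality. If $t = 0$ then $k+1 = n+2 > n$. If $t = 1$ then $(p,t) = (2,1)$ and $k = n+1 > n$, so the exceptional clause applies. In every case the hypotheses of \cref{thm:mainthm2} are met, so that theorem produces a map $\Sigma^V C(X)\to C(X)$ inducing an equivalence in $Q_{2^n}$-equivariant $K$-theory; since everything in this section is silently $2$-localized, this is exactly the self map claimed.

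The proof has no genuine obstacle: it is an application of \cref{thm:mainthm2}, and the only points requiring any care are pinning down $\dim_\bbC\Ind_{Q_8}^{Q_{2^n}}(\bbH)$ and the elementary case analysis on $t$ needed to confirm the inequality $k+1\ge n+t$ together with its $(p,t) = (2,1)$ exception. Unlike in the cyclic case we do not claim a refinement to multiplication by a Bott class, since \cref{cor:bottclass} was only established for cyclic $p$-groups.
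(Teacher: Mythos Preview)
The proposal is correct and takes exactly the paper's approach: the paper simply states that the theorem is obtained by ``Specializing \cref{thm:mainthm2} to this case,'' and you have correctly supplied the bookkeeping (identifying $\dim_\bbC V = 2^{\max(2,t)+n-2}$, setting $k = \max(2,t)+n-1\ge 4$, and checking the inequality casewise on $t$) that makes this specialization go through.
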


\begin{ex}
There are $KU_{Q_8}$-equivalences
\begin{align*}
\Sigma^{2^{d+3}\bbH}C(2^d[Q_8]) &\rightarrow C(2^d[Q_8]),\\
\Sigma^{2^{d+2}\bbH}C(2^d[Q_8/C_2])&\rightarrow C(2^d[Q_8/C_2]),\\
\Sigma^{4\bbH}C([Q_8/\langle i \rangle])&\rightarrow C([Q_8/\langle i \rangle]),\\
\Sigma^{2^{d+2}\bbH}C(2^{d+1}[Q_8/\langle i \rangle])&\rightarrow C(2^{d+1}[Q_8/\langle i \rangle]),
\end{align*}
for $d\geq 0$.
\tqed
\end{ex}

\begingroup
\raggedright
\bibliography{refs}
\bibliographystyle{alpha}
\endgroup

\end{document}